\font\chuto=cmbx10 at 16pt \font\kamy=lcmssb8
\newtheorem{theorem}{Theorem}[section]
\newtheorem{definition}[theorem]{Definition}
\newtheorem{corollary}[theorem]{Corollary}
\newtheorem{lemma}[theorem]{Lemma}
\newtheorem{example}[theorem]{Example}
\newtheorem{remark}[theorem]{Remark}
\numberwithin{equation}{section}
\begin{document}
\vskip1.5cm

\centerline {\bf \chuto On Properties of Geodesic Semilocal   E-Preinvex Functions
}

\vskip.2cm

\centerline {\bf \chuto }


\vskip.8cm \centerline {\kamy Adem Kili\c{c}man  $ ^{a} $ and Wedad Saleh  $^b,$\footnote{{\tt Corresponding author email: wed\_10\_777@hotmail.com ( Wedad Saleh)}}}

\vskip.5cm

\centerline {$^a$ Department of Mathematics, Putra University of Malaysia (UPM),Serdang, Malaysia   }
 \centerline {$^{b}$ Department of Mathematics, Taibah University, Al- Medina, Saudi Arabia } 


\vskip.5cm \hskip-.5cm{\small{\bf Abstract :} The authors define 
	 a class of functions on Riemannian manifolds, which is called geodesic semilocal  E-preinvex functions, as a generalization of geodesic  semilocal  E-convex and geodesic semi  E-preinvex functions and some of its properties are established. Furthermore, a nonlinear fractional multiobjective programming is considered, where the functions involved are geodesic E-$ \eta $-semidifferentiability, sufficient optimality conditions are obtained. A dual is formulated and duality results are proved using concepts of geodesic semilocal  E-preinvex functions, geodesic  pseudo-semilocal   E-prinvex functions and geodesic quasi-semilocal  E-preinvex functions.

\vskip0.3cm\noindent {\bf Keywords :}
Generalized convexity, Riemannian geometry, Duality

\noindent{\bf 2000 Mathematics Subject Classification : 52A41; 53B20;   90C46   } 

\hrulefill


\section{Introduction}
\hskip0.6cm
  Convexity and generalized covexity play a significant role in many fields, for example, in biological system, economy, optimization, and so on \cite{kS,GrinalattLinnainmaa2011,RuelAyres1999}. 
 
  \vspace{1.0mm}
   Generalized convex functions, labelled as semilocal convex functions were introduced by Ewing  \cite{Ewing} by using more general semilocal perinvexity and $ \eta- $ semidifferentiability. After that optimality conditions for weak vector minima was given  \cite{Preda1997}. Also, optimality conditions and duality results for a nonlinear fractioal involving $ \eta- $ semidifferentiability were established  \cite{Preda2003}.
   
 Furthermore,some optimality conditions and duality results for semilocal E-convex programming were established \cite{Hu2007}. E-convexity was extedned to E-preinvexity \cite{FulgaPreda}. Recently, semilocal E-prenivexity (SLEP) and some of its applications were introdued \cite{Jiao2011,Jiao2012,Jiao2013}.

 \vspace{1.0mm}
  Generalized convex functions in manifolds such as Riemannian manifolds were studied by many authors; see \cite{Agarwal,BG,Ferrara,Mordukhovch2011}. Udrist \cite{Udriste1994} and Rapcsak  \cite{Rapcsak1997} considered a generalization of convexity called geodesic convexity.  In 2012, geodesic E-convex (GEC) sets and geodesic E-convex (GEC)functions on Riemannian manifolds were studied \cite{IAA2012}. Moreover, geodesic semi E-convex (GsEC) functions were introduced \cite{Iqbal}. Recently, geodesic strongly E-convex (GSEC) functions were introduced and discussed some of their properties \cite{AW}.
 \section{Geodesic Semilocal  E-Preinvexity}
  \hskip0.6cm
 \begin{definition}\label{df}
  A nonempty set $ B \subset \aleph $ is said to be
 \begin{enumerate}
 	\item geodesic E-invex (GEI) with respect to $ \eta $ if there is exactly one geodesic $ \gamma_{E(\kappa_{1}), E(\kappa_{2})}:\left[0,1 \right]\longrightarrow \aleph  $ such that 
 	\begin{eqnarray*}
 	\gamma_{E(\kappa_{1}), E(\kappa_{2})}(0)=E(\kappa_{2}),\acute{\gamma}_{E(\kappa_{1}), E(\kappa_{2})}=\eta(E(\kappa_{1}),E(\kappa_{2}), \gamma_{E(\kappa_{1}), E(\kappa_{2})}(t)\in B,
 	\end{eqnarray*}
 $  \forall \kappa_{1},\kappa_{2}\in B$ and $t\in[0,1].  $
 	\item a geodesic local E-invex (GLEI) respect to $ \eta $, if  there is $ u(\kappa_{1},\kappa_{2})\in\left(\left.0,1 \right]  \right.  $ such that $ \forall t\in [0,u (\kappa_{1},\kappa_{2})] $,
 	\begin{equation}\label{eq2}
 	\gamma_{E(\kappa_{1}),E(\kappa_{2})}(t)\in B \ \ \forall\kappa_{1},\kappa_{2}\in B. 
 	\end{equation} 
 	\item
 	 a geodesic local starshaped E-convex, if there is a map $ E $ such that corresponding to each pair of points $ \kappa_{1},\kappa_{2}\in A $, there is a maximal positive number $ u(\kappa_{1},\kappa_{2})\leq 1 $ such as  
 	\begin{equation}\label{eq1}
 	\gamma_{E(\kappa_{1}),E(\kappa_{2})}\in A, \ \ \forall t\in [0, u(\kappa_{1},\kappa_{2})] 
 	\end{equation}
 \end{enumerate}    
 \end{definition}

\begin{definition}
	  A function $ f: A\subset \aleph\longrightarrow \mathbb{R} $ is said to be 
	 \begin{enumerate}
	 	\item Geodesic E-preinvex (GEP) on $ A\subset \aleph$ with repect to $ \eta $ if $A$ is a GEI set and $$f\left(\gamma_{E(\kappa_{1}),E(\kappa_{2})}(t)\right)\leq t f(E(\kappa_{1}))+(1-t)f(E(\kappa_{2})) , \ \ \forall \kappa_{1},\kappa_{2}\in A, t\in[0,1]; $$
	 	\item  Geodesic semi E-preinvex (GSEP)  on $ A $ with respect to $ \eta $ if if $A$ is a GEI set and
	 	$$f\left(\gamma_{E(\kappa_{1}),E(\kappa_{2})}(t)\right)\leq t f(\kappa_{1})+(1-t)f(\kappa_{2}) , \ \ \forall \kappa_{1},\kappa_{2}\in A, t\in[0,1]. $$
	 	\item Geodesic Local E-preinvex (GLEP) on $ A\subset \aleph $ with respect to $ \eta $, if for any $ \kappa_{1},\kappa_{2}\in A $ there exists $ 0<v(\kappa_{1},\kappa_{2})\leq u(\kappa_{1},\kappa_{2}) $ such that $ A $ is a  GLEI  set and 
	 	$$f\left(\gamma_{E(\kappa_{1}),E(\kappa_{2})}(t)\right)\leq t f(E(\kappa_{1}))+(1-t)f(E(\kappa_{2})) , \ \ \forall t\in[0,v(\kappa_{1},\kappa_{2})].$$
	 \end{enumerate}
\end{definition}
\begin{definition}
	A function $ f:\aleph\longrightarrow \mathbb{R} $ is a geodesic semilocal E-convex ( GSLEC) on a geodesic local starshaped  E-convex set $ B\subset \aleph $ if for each pair of $ \kappa_{1},\kappa_{2}\in B $ ( with a maximal positive number $ u(\kappa_{1},\kappa_{2})\leq1 $ satisfying \ref{eq1}), there exists a positive number $ v(\kappa_{1},\kappa_{2})\leq u(\kappa_{1},\kappa_{2}) $ satisfying $$f\left(\gamma_{E(\kappa_{1}), E(\kappa_{2})}(t)\right)\leq t f(\kappa_{1})+(1-t)f(\kappa_{2}) , \ \ \forall t\in[0,v(\kappa_{1},\kappa_{2})].$$
\end{definition}
\begin{remark}
	Every GEI set with respect to $ \eta $ is a  GLEI set with respect to $ \eta $, where $ u(\kappa_{1},\kappa_{2})=1, \forall \kappa_{1},\kappa_{2}\in \aleph $. On the other hand, their coverses are not recessarily true and we can see that in the next example.
\end{remark}
\begin{example}
	Put $ A=\left[ \left. -4,-1\right)  \right. \cup[1,4] $, \begin{eqnarray*}
		E(\kappa) &=& \begin{cases}
			\kappa^{2} \ \ if\ \ \left|\kappa \right| \leq 2,\\   -1\ \ if \ \ \left|\kappa \right| > 2;
		\end{cases}
	\end{eqnarray*}
	\begin{eqnarray*}
		\eta (\kappa,\iota) &=& \begin{cases}
			\kappa-\iota \ \ if\ \ \kappa\geqslant 0, \iota\geqslant 0\ \ or\ \ \kappa\leq 0, \iota\leq0 ,\\   -1-\iota \ \ if \ \ \kappa>0, \iota\leq 0 \ \ or \ \ \kappa\geqslant0 ,\iota< 0, \\ 1-\iota \ \ if \ \ \kappa<0, \iota\geqslant 0 \ \ or\ \ \kappa\leq 0, \iota>0;
		\end{cases}
	\end{eqnarray*}
	\begin{eqnarray*}
		\gamma_{\kappa,\iota}(t) &=& \begin{cases}
			\iota+t(\kappa-l) \ \ if \ \ \kappa\geqslant 0, \iota\geqslant 0 \ \ or \ \ \kappa\leq 0, \iota\leq0 ,\\   \iota+t(-1-\iota) \ \ if \ \  \kappa>0, \iota\leq 0 \ \ or\ \ \kappa\geqslant0 ,\iota< 0, \\ \iota+t(1-\iota) \ \ if \ \ \kappa<0, \iota\geqslant 0 \ \ or \ \ \kappa\leq 0, \iota>0.
		\end{cases}
	\end{eqnarray*}
	Hence $ A $ is a  GLEI set with respect to $ \eta $.
	But, when $ \kappa=3, \iota=0 $, there is a $ t_{1}\in[0,1] $ such that $ \gamma_{E(\kappa),E(\iota)}(t_{1})=-t_{1} $, then if $ t_{1}=1 $, we obtain $\gamma_{E(\kappa),E(\iota)}(t_{1})\notin A  $.
\end{example}
\begin{definition}\label{de1}
	A function $ f: \aleph\longrightarrow \mathbb{R} $ is GSLEP  on $ B\subset \aleph $ with respect to $ \eta $ if for any $ \kappa_{1},\kappa_{2}\in B $, there is $ 0<v(\kappa_{1},\kappa_{2})\leq u(\kappa_{1},\kappa_{2})\leq1 $ such that $ B $ is a GLEI  set and
	\begin{equation}\label{eq3}
	f\left(\gamma_{E(\kappa_{1}),E(\kappa_{2})}(t)\right)\leq t f(\kappa_{1})+(1-t)f(\kappa_{2}) , \ \ \forall t\in[0,v(\kappa_{1},\kappa_{2})].
	\end{equation} 
	If $$f\left(\gamma_{E(\kappa_{1}),E(\kappa_{2})}(t)\right)\geqslant t f(\kappa_{1})+(1-t)f(\kappa_{2}) , \ \ \forall t\in[0,v(\kappa_{1},\kappa_{2})],$$ then $ f $ is GSLEP  on $ B $.
	
\end{definition}
\begin{remark}
	Any GSLEC  function is a GSLEP function. Also, any GSEP function with respect to $ \eta $ is a  GSLEP function. On the other hand, their converses are not necessarily true.
\end{remark}
The next example shows SLGEP, which is neither a GSLEC function nor a GSEP function.
\begin{example}
	Assume that $ E: \mathbb{R}\longrightarrow \mathbb{R} $ is given as
	 \begin{eqnarray*}
		E(m) &=& \begin{cases}
			0 \ \ if\ \ m<0,\\   1 \ \ if\ \  1<m\leq 2,\\ m \ \ if\ \ 0\leq m\leq1 \ \ or\ \ m>2
		\end{cases}
	\end{eqnarray*}
and the map $ \eta: \mathbb{R}\times \mathbb{R}\longrightarrow \mathbb{R} $ is defined as 
	\begin{eqnarray*}
		\eta (m,n) &=& \begin{cases}
			0 \ \ if\ \ m= n,\\  1-m \ \ if\ \ m\neq n ;
		\end{cases}
	\end{eqnarray*}
also,
	\begin{eqnarray*}
		\gamma_{m,n}(t) &=& \begin{cases}
			n \ \ if\ \ m= n,\\   n+t(1-m) \ \ if\ \  m\neq n. 
		\end{cases}
	\end{eqnarray*}
	Since $ \mathbb{R} $ is a geodesic local starshaped  E-convex set and a  geodesic local E-invex set with respect to $ \eta $. Assume that $ h: \mathbb{R}\longrightarrow \mathbb{R} $, where
	\begin{eqnarray*}
	h(m) &=& \begin{cases}
			0 \ \ if\ \ 1<m\leq 2,\\  1 \ \ if\ \  m>2, \\ -m+1 \ \ if\ \ 0\leq m\leq1,\\ -m+2 \ \ if\ \ m<0.
		\end{cases}
	\end{eqnarray*}
Then $ h $ is a GSLEP  on $ \mathbb{R} $ with respect to $ \eta $. However, when $ m_{0}=2, n_{0}=3 $ and for any $ v\in\left(\left.0,1 \right]  \right.  $, there is a sufficiently small $ t_{0}\in\left(\left.0,v \right]  \right.  $ such as
$$h\left(\gamma_{E(m_{0}),E(n_{0})}(t_{0}) \right)=1>(1-t_{0})=t_{0}h(m_{0})+(1-t_{0})h(n_{0}) .$$
Then $ h(m) $ is not a GSLEC function on $ \mathbb{R} $.

  \vspace{1.0mm}
Similarly, taking $ m_{1}=1, n_{1}=4 $, we have 
$$h\left(\gamma_{E(m_{1}),E(n_{1})}(t_{1}) \right)=1>(1-t_{1})=t_{1}h(m_{1})+(1-t_{1})h(n_{1}) .$$
for some $ t_{1}\in[0,1] $.\\

Hence $ h(m) $ is not a  GSEP function on $ \mathbb{R} $ with respect to $ \eta $.
\end{example}
\begin{definition}\label{de2}
A function $ h:S\subset \aleph\longrightarrow \mathbb{R} $ , where $ S $  a GLEI set, is said to be a geodesic quasi-semilocal E-preinvex (GqSLEP) (with respect to $ \eta $) if for all $ \kappa_{1},\kappa_{2}\in S $ satisfying $ h(\kappa_{1})\leq h(\kappa_{2}) $, there is a positive number $ v(\kappa_{1},\kappa_{2})\leq u(\kappa_{1},\kappa_{2}) $  such that 
$$h\left(\gamma_{E(\kappa_{1}),E(\kappa_{2})}(t)\right) \leq h(\kappa_{2}), \forall t\in[0,v(\kappa_{1},\kappa_{2})]. $$
\end{definition}
\begin{definition}\label{de3}
A function $ h:S\subset \aleph\longrightarrow \mathbb{R} $ , where $ S $  a GLEI set, is said to be a geodesic pseudo-semilocal E-preinvex ( GpSLEP) (with respect to $ \eta $) if for all $ \kappa_{1},\kappa_{2}\in S $ satisfying $ h(\kappa_{1})<h(\kappa_{2}) $, there are  positive numbers  $ v(\kappa_{1},\kappa_{2})\leq u(\kappa_{1},\kappa_{2}) $ and  $ w(\kappa_{1},\kappa_{2}) $ such that 
	$$h\left(\gamma_{E(\kappa_{1}),E(\kappa_{2})}(t)\right) \leq h(\kappa_{2})-t w(\kappa_{1},\kappa_{2}), \forall t\in[0,v(\kappa_{1},\kappa_{2})]. $$
\end{definition}
\begin{remark}
	Every  GSLEP on a GLEI set with respect to $ \eta $ is both a GqELEP  function and a  GpSLEP  function.
\end{remark}
\begin{definition}\label{de4}
A function $ h:S\longrightarrow \mathbb{R} $ 
is called  geodesic E-$ \eta $- semidifferentiable at $ \kappa^{*}\in S $ where $ S\subset \aleph $ is a GLEI set with respect to $ \eta $  if $ E(\kappa^{*})=\kappa^{*} $ and 
\begin{equation*}
h'_{+}\left(\gamma_{\kappa^{*},E(\kappa)}(t) \right)= \lim_{t\longrightarrow 0^{+}} \frac{1}{t}\left[h\left(\gamma_{\kappa^{*},E(\kappa)}(t)\right) -h(\kappa^{*}) \right],  
\end{equation*}
exists for every $ \kappa\in S. $.
\end{definition}
\begin{remark}
	\begin{enumerate}
		\item Let $ \aleph=\mathbb{R}^{n} $, then the geodesic E-$ \eta $- semidifferentiable is E-$ \eta $-semidifferentiable \cite{Jiao2011}.
		\item If $ \aleph=\mathbb{R}^{n} $ and $ E=I $, then the geodesic E-$ \eta $-semidifferentiable is the $ \eta $-semidifferentiablitiy \cite{Niculescu2007Optimality} .
		\item If $ \aleph=\mathbb{R}^{n} $ , $ E=I $  and $ \eta(\kappa,\kappa^{*})=\kappa-\kappa^{*} $, then geodesic E-$ \eta $-semidifferentiable is the semidifferentiability \cite{Jiao2011}.
	\end{enumerate}
\end{remark}
\begin{lemma}\label{lemma2}
	\begin{enumerate}
		\item Assume that $ h $ is a GSLEP (E-preconcave) and geodesic  E-$ \eta $-semidifferentiable at $ \kappa^{*}\in S\subset \aleph $, where $ S $ is a GLEI set with respect to $ \eta $. Then 
		$$h(\kappa)-h(\kappa^{*})\geqslant (\leq) h'_{+}(\gamma_{\kappa^{*},E(\kappa)}(t)), \forall \kappa\in S.$$
		\item Let $ h $ be  GqSLEP (GpSLEP) and geodesic   E-$ \eta $-semidifferentiable at $ \kappa^{*}\in S\subset \aleph $, where $ S $ is a LGEI set with respect to $ \eta $. Hence $$h(\kappa)\leq(<) h(\kappa^{*})\Rightarrow h'_{+}(\gamma_{\kappa^{*},E(\kappa)}(t))\leq (<)0, \forall \kappa\in S.$$
	\end{enumerate}
\end{lemma}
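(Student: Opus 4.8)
The plan is to reduce each assertion to the defining inequality of the corresponding generalized-convexity notion, applied to the pair $(\kappa,\kappa^{*})$, and then to perform the elementary operation of forming the one-sided difference quotient at $\kappa^{*}$ and letting $t\to0^{+}$. The hypothesis $E(\kappa^{*})=\kappa^{*}$ built into Definition \ref{de4} is what makes this work: it guarantees that the geodesic $\gamma_{\kappa^{*},E(\kappa)}$ appearing in the semiderivative is exactly the curve $\gamma_{E(\kappa^{*}),E(\kappa)}$ occurring in Definitions \ref{de1}, \ref{de2} and \ref{de3} when the pair of base points is taken to be $(\kappa,\kappa^{*})$. Consequently every defining inequality has left-hand side of the form $h\bigl(\gamma_{\kappa^{*},E(\kappa)}(t)\bigr)$, whose normalized increment converges, by the assumed semidifferentiability, to $h'_{+}(\gamma_{\kappa^{*},E(\kappa)}(t))$.

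For part (1), first I would invoke the GSLEP inequality \eqref{eq3} with $\kappa_{1}=\kappa$ and $\kappa_{2}=\kappa^{*}$, obtaining
\[
h\bigl(\gamma_{\kappa^{*},E(\kappa)}(t)\bigr)\le t\,h(\kappa)+(1-t)\,h(\kappa^{*}),\qquad t\in(0,v(\kappa,\kappa^{*})].
\]
Subtracting $h(\kappa^{*})$ gives $h\bigl(\gamma_{\kappa^{*},E(\kappa)}(t)\bigr)-h(\kappa^{*})\le t\,[\,h(\kappa)-h(\kappa^{*})\,]$; dividing by $t>0$ and letting $t\to0^{+}$ sends the left-hand side to $h'_{+}(\gamma_{\kappa^{*},E(\kappa)}(t))$ while the right-hand side stays the constant $h(\kappa)-h(\kappa^{*})$. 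This yields $h'_{+}(\gamma_{\kappa^{*},E(\kappa)}(t))\le h(\kappa)-h(\kappa^{*})$, which is the claimed inequality. The $E$-preconcave statement follows verbatim after reversing every inequality.

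For part (2), assume first that $h$ is GqSLEP and that $h(\kappa)\le h(\kappa^{*})$. Then Definition \ref{de2} with $(\kappa_{1},\kappa_{2})=(\kappa,\kappa^{*})$ gives $h\bigl(\gamma_{\kappa^{*},E(\kappa)}(t)\bigr)\le h(\kappa^{*})$ for small $t$; subtracting $h(\kappa^{*})$, dividing by $t$ and passing to the limit produces $h'_{+}(\gamma_{\kappa^{*},E(\kappa)}(t))\le0$. For the GpSLEP case, assume the strict inequality $h(\kappa)<h(\kappa^{*})$ and use Definition \ref{de3}: there exist $v(\kappa,\kappa^{*})$ and $w(\kappa,\kappa^{*})>0$ with $h\bigl(\gamma_{\kappa^{*},E(\kappa)}(t)\bigr)\le h(\kappa^{*})-t\,w(\kappa,\kappa^{*})$. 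The same difference-quotient argument now gives $h'_{+}(\gamma_{\kappa^{*},E(\kappa)}(t))\le -w(\kappa,\kappa^{*})<0$, the required strict conclusion.

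I expect the only genuine subtlety to be bookkeeping rather than analysis. One must verify that the geodesic in the semidifferentiability hypothesis and the one in the convexity hypotheses are literally the same curve, which is exactly where $E(\kappa^{*})=\kappa^{*}$ enters; without it the two curves would emanate from different base points and the difference quotient would not compute the stated semiderivative. The second point worth stating explicitly is why the pseudo case yields a \emph{strict} inequality while the quasi case does not: passing to a limit only preserves $\le$, so the strictness in the GpSLEP conclusion is not inherited from $h(\kappa)<h(\kappa^{*})$ but is manufactured by the surviving term $-w(\kappa,\kappa^{*})$ after division by $t$. Everything else is the standard one-sided-derivative manipulation, legitimate because the semiderivative is assumed to exist.
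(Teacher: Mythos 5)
Your proof is correct and is precisely the argument the paper intends: the paper gives no details, stating only that the lemma follows ``directly by using definitions (\ref{de1},\ref{de2},\ref{de3} and \ref{de4})'', and your writeup is the straightforward expansion of that remark via the defining inequalities applied to the pair $(\kappa,\kappa^{*})$ and the one-sided difference quotient. Your two explicit observations --- that $E(\kappa^{*})=\kappa^{*}$ is what identifies the geodesic in the semidifferentiability definition with the one in the convexity definitions, and that the strict inequality in the GpSLEP case survives the limit only because of the term $-w(\kappa,\kappa^{*})$ --- are exactly the points the paper leaves implicit.
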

The above lemma is directly by using definitions (\ref{de1},\ref{de2},\ref{de3} and \ref{de4}).
\begin{theorem}
	Let $ f: S\subset \aleph\longrightarrow \mathbb{R} $ be a  GLEP function on a GLEI set $ S $ with  respect to $ \eta $, then $ f $ is a GSLEP  function iff $ f(E(\kappa))\leq f(\kappa),\forall \kappa\in S $.
\end{theorem}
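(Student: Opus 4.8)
The plan is to establish the biconditional by proving the two implications separately, in each case reducing the target inequality to the defining inequality of the available hypothesis and exploiting that the barycentric coefficients $t$ and $1-t$ are nonnegative on $[0,1]$.

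For the direction assuming $f(E(\kappa))\le f(\kappa)$ for all $\kappa\in S$, I would start from the GLEP inequality, which holds because $f$ is GLEP on the GLEI set $S$: for each pair $\kappa_1,\kappa_2\in S$ there is $0<v(\kappa_1,\kappa_2)\le u(\kappa_1,\kappa_2)$ with
$$f\left(\gamma_{E(\kappa_1),E(\kappa_2)}(t)\right)\le t f(E(\kappa_1))+(1-t)f(E(\kappa_2)),\quad \forall t\in[0,v(\kappa_1,\kappa_2)].$$
Since $t\in[0,v(\kappa_1,\kappa_2)]\subseteq[0,1]$ forces $t\ge 0$ and $1-t\ge 0$, the pointwise hypotheses $f(E(\kappa_1))\le f(\kappa_1)$ and $f(E(\kappa_2))\le f(\kappa_2)$ may be multiplied by $t$ and $1-t$ respectively and added, giving $t f(E(\kappa_1))+(1-t)f(E(\kappa_2))\le t f(\kappa_1)+(1-t)f(\kappa_2)$. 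Chaining this with the GLEP inequality reproduces exactly the GSLEP inequality of Definition \ref{de1} with the same $v(\kappa_1,\kappa_2)$, so $f$ is GSLEP.

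For the converse, assume $f$ is GSLEP. The key observation is that the geodesic satisfies $\gamma_{E(\kappa_1),E(\kappa_2)}(0)=E(\kappa_2)$ by the first clause of Definition \ref{df}, so I would evaluate the GSLEP inequality at the admissible endpoint $t=0$ (which lies in $[0,v(\kappa_1,\kappa_2)]$ because $v>0$). This collapses the right-hand side to $f(\kappa_2)$ and the left-hand side to $f(E(\kappa_2))$, yielding $f(E(\kappa_2))\le f(\kappa_2)$. Letting $\kappa_2$ range over $S$ (taking, for instance, $\kappa_1=\kappa_2$) gives precisely the required inequality $f(E(\kappa))\le f(\kappa)$ for all $\kappa\in S$.

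The argument is essentially bookkeeping, so I do not anticipate a serious obstacle; the only points requiring care are confirming that the endpoint $t=0$ genuinely lies in the range of validity and that the base-point condition $\gamma_{E(\kappa_1),E(\kappa_2)}(0)=E(\kappa_2)$ is invoked correctly. I would also note, as a consistency check, that the converse direction uses only the GSLEP property and not the standing GLEP hypothesis, which is harmless since the theorem is stated under the assumption that $f$ is GLEP throughout.
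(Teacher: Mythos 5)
Your proposal is correct and follows essentially the same route as the paper: the converse direction chains the GLEP inequality with $t f(E(\kappa_1))+(1-t)f(E(\kappa_2))\le t f(\kappa_1)+(1-t)f(\kappa_2)$, and the forward direction evaluates the GSLEP inequality at $t=0$ using $\gamma_{E(\kappa_1),E(\kappa_2)}(0)=E(\kappa_2)$. If anything, your bookkeeping is slightly cleaner than the paper's, whose own $t=0$ step silently swaps the roles of $\kappa_1$ and $\kappa_2$ relative to Definition \ref{de1}, a harmless discrepancy since the pair ranges over all of $S\times S$.
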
 
\begin{proof}
	Assume that $ f $ is a GSLEP function on set $ S $ with respect to $ \eta $, then $\forall \kappa_{1},\kappa_{2}\in S $, there is a positive number $ v(\kappa_{1},\kappa_{2})\leq u(\kappa_{1},\kappa_{2}) $ where
	$$f(\gamma_{E(\kappa_{1}),E(\kappa_{2})}(t))\leq tf(\kappa_{2})+(1-t)f(\kappa_{1}), t\in[0,v(\kappa_{1},\kappa_{2})].$$ 
	By letting $ t=0 $, then $ f(E(\kappa_{1}))\leq f(\kappa_{1}),\forall \kappa_{1}\in S $.\\
	
	Conversely, consider $ f $ is a GLEP function on a GLEI  set $ S $, then for any $ \kappa_{1},\kappa_{2}\in S $, there exist $ u(\kappa_{1},\kappa_{2}) \in \left(\left.0,1 \right]  \right. $ (\ref{eq2}) and $ v(\kappa_{1},\kappa_{2}) \in \left(\left.0,u(\kappa_{1},\kappa_{2}) \right]  \right. $
	such that 
	$$f(\gamma_{E(\kappa_{1}),E(\kappa_{2})}(t))\leq tf(E(\kappa_{1}))+(1-t)f(E(\kappa_{2})), t\in[0,v(\kappa_{1},\kappa_{2})].$$ 
	Since $ f(E(\kappa_{1})) \leq f(\kappa_{1}), \forall \kappa_{1}\in S$, then
	$$f(\gamma_{E(\kappa_{1}),E(\kappa_{2})}(t))\leq tf(\kappa_{1})+(1-t)f(\kappa_{2}), t\in[0,v(\kappa_{1},\kappa_{2})].$$ 
\end{proof}
\begin{definition}
	The set $ \omega=\left\lbrace(\kappa,\alpha):\kappa\in B\subset \aleph, \alpha\in \mathbb{R} \right\rbrace $ is said to be a GLEI set with respect to $ \eta $ corresponding to $ \aleph $ if there are two maps $ \eta, E $ and a maximal positive number $ u((\kappa_{1},\alpha_{1}), (\kappa_{2},\alpha_{2}))\leq 1 $, for each $ (\kappa_{1},\alpha_{1}), (\kappa_{2},\alpha_{2})\in \omega $ such that 
	$$ \left(\gamma_{E(\kappa_{1}),E(\kappa_{2})}(t),t\alpha_{1}+(1-t)\alpha_{2} \right)\in \omega, \forall t\in\left[0,u((\kappa_{1},\alpha_{1}), (\kappa_{2},\alpha_{2})) \right]. $$  
\end{definition}
\begin{theorem}\label{th1}
	Let $ B\subset  \aleph$ be a GLEI set with respect to $ \eta $. Then $ f $ is a GSLEP function on $ B $ with respect to $ \eta $ iff its epigraph $$ \omega_{f}=\left\lbrace (\kappa_{1},\alpha):\kappa_{1}\in B, f(\kappa_{1})\leq\alpha, \alpha\in \mathbb{R} \right\rbrace $$  is a GLEI set with respect to $ \eta $ corresponding to  $ \aleph $. 
\end{theorem}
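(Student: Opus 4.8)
The plan is to prove both implications by directly translating the defining inequality of a GSLEP function into the membership condition characterizing a GLEI set corresponding to $\aleph$; no heavy machinery is needed, since the two conditions are essentially restatements of one another once one tracks the convex combination appearing in the second coordinate of the epigraph.

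For the forward implication, I would assume $f$ is GSLEP on $B$ and fix two points $(\kappa_1,\alpha_1),(\kappa_2,\alpha_2)\in\omega_f$, so that $\kappa_1,\kappa_2\in B$ with $f(\kappa_1)\le\alpha_1$ and $f(\kappa_2)\le\alpha_2$. By Definition \ref{de1} there is $0<v(\kappa_1,\kappa_2)\le u(\kappa_1,\kappa_2)\le1$ with $f(\gamma_{E(\kappa_1),E(\kappa_2)}(t))\le t f(\kappa_1)+(1-t)f(\kappa_2)$ for $t\in[0,v(\kappa_1,\kappa_2)]$. Chaining this with $f(\kappa_i)\le\alpha_i$ gives $f(\gamma_{E(\kappa_1),E(\kappa_2)}(t))\le t\alpha_1+(1-t)\alpha_2$; together with $\gamma_{E(\kappa_1),E(\kappa_2)}(t)\in B$ (which holds on $[0,u(\kappa_1,\kappa_2)]\supseteq[0,v(\kappa_1,\kappa_2)]$ because $B$ is GLEI) this says exactly that $(\gamma_{E(\kappa_1),E(\kappa_2)}(t),\,t\alpha_1+(1-t)\alpha_2)\in\omega_f$. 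Taking the maximal positive number for the pair $((\kappa_1,\alpha_1),(\kappa_2,\alpha_2))$ to be $v(\kappa_1,\kappa_2)$ then shows that $\omega_f$ is a GLEI set corresponding to $\aleph$.

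For the converse, I would assume $\omega_f$ is a GLEI set corresponding to $\aleph$ and fix $\kappa_1,\kappa_2\in B$. Since $f(\kappa_i)\le f(\kappa_i)$, the points $(\kappa_1,f(\kappa_1))$ and $(\kappa_2,f(\kappa_2))$ lie in $\omega_f$, so there is a maximal number $u((\kappa_1,f(\kappa_1)),(\kappa_2,f(\kappa_2)))\le1$ with $(\gamma_{E(\kappa_1),E(\kappa_2)}(t),\,t f(\kappa_1)+(1-t)f(\kappa_2))\in\omega_f$ on the corresponding interval. Reading off the epigraph membership in its second coordinate yields $f(\gamma_{E(\kappa_1),E(\kappa_2)}(t))\le t f(\kappa_1)+(1-t)f(\kappa_2)$, which is precisely the GSLEP inequality \eqref{eq3}; setting $v(\kappa_1,\kappa_2)$ equal to this $u$-value (and using that $B$ is GLEI by hypothesis) completes this direction.

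The only point requiring care—and the closest thing to an obstacle—is the bookkeeping of the various maximal-step parameters: one must verify that the interval on which the epigraph inclusion holds can be taken inside the interval on which $\gamma_{E(\kappa_1),E(\kappa_2)}(t)$ remains in $B$, so that the first-coordinate condition and the inequality are simultaneously valid. Because $v(\kappa_1,\kappa_2)\le u(\kappa_1,\kappa_2)$ in the definition of GSLEP and $B$ is assumed to be a GLEI set, these intervals are compatible and the identification of parameters above goes through without difficulty.
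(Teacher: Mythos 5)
Your proposal is correct and takes essentially the same route as the paper's own proof: both directions translate directly between the GSLEP inequality and membership of $\left(\gamma_{E(\kappa_{1}),E(\kappa_{2})}(t),\,t\alpha_{1}+(1-t)\alpha_{2}\right)$ in $\omega_{f}$, using the points $(\kappa_{i},f(\kappa_{i}))$ for the converse. If anything, your write-up is more careful than the paper's, which omits the explicit chaining $f\left(\gamma_{E(\kappa_{1}),E(\kappa_{2})}(t)\right)\leq tf(\kappa_{1})+(1-t)f(\kappa_{2})\leq t\alpha_{1}+(1-t)\alpha_{2}$ in the forward direction (and contains a typo applying $f$ to the pair), whereas you also handle the bookkeeping of the step-size parameters $v\leq u$ explicitly.
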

\begin{proof}
	Suppose that $ f $ is a GSLEP on $ B $ with respect to $ \eta $ and $ (\kappa_{1},\alpha_{1}), (\kappa_{2},\alpha_{2})\in \omega_{f} $, then $ \kappa_{1},\kappa_{2}\in B, f(\kappa_{1})\leq \alpha_{1}, f(\kappa_{2})\leq \alpha_{2} $. By applying definition \ref{df}, we obtain 
	 $ \gamma_{E(\kappa_{1}),E(\kappa_{2})}(t)\in B, \forall t\in\left[0, u(\kappa_{1},\kappa_{2}) \right].  $\\
	 Moreover, 
	  there is a positive number $ v(\kappa_{1},\kappa_{2})\leq u(\kappa_{1},\kappa_{2}) $ such that 
	 $$f\left(\gamma_{E(\kappa_{1}),E(\kappa_{2})}(t), t\alpha_{1}+(1-t)\alpha_{2} \right)\in \omega_{f}, \forall t\in[0,v(\kappa_{1},\kappa_{2})]. $$
	 
	  \vspace{1.0mm} 
	 Conversely, if $ \omega_{f} $ is a GLEI set with respect to $ \eta $ corresponding to $ \aleph $ ,then for any points $ (\kappa_{1},f(\kappa_{1})) , (\kappa_{2},f(\kappa_{2}))\in \omega_{f}$, there is a maximal positive number $ u((\kappa_{1},f(\kappa_{1})), (\kappa_{2},f(\kappa_{2}))\leq 1 $ such that  $$\left( \gamma_{E(\kappa_{1}),E(\kappa_{2})}(t), tf(\kappa_{1}) +(1-t)f(\kappa_{2})\right) \in \omega_{f},\forall t\in\left[0, u((\kappa_{1},f(\kappa_{1})),(\kappa_{2},f(\kappa_{2}))) \right].$$ 
	 That is, $\gamma_{E(\kappa_{1}),E(\kappa_{2})}(t) \in B, $ $$f\left(\gamma_{E(\kappa_{1}),E(\kappa_{2})}(t) \right)\leq tf(\kappa_{1}) +(1-t)f(\kappa_{2}), \ \ t\in\left[0,u((\kappa_{1},f(\kappa_{1})),(\kappa_{2},f(\kappa_{2}))) \right]. $$ Thus, $ B $ is a GLEI set and $ f $ is a GSLEP function on $ B $.  
\end{proof}
\begin{theorem}
	If $ f $ is a GSLEP function on a  GLEI set $ B\subset \aleph $ with respect to $ \eta $ , then the level $  K_{\alpha}=\left\lbrace \kappa_{1}\in B: f(\kappa_{1})\leq \alpha \right\rbrace $ is a  GLEI set for any $ \alpha\in \mathbb{R} $.
\end{theorem}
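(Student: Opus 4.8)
The plan is to verify directly that $K_\alpha$ meets the defining requirement of a GLEI set (Definition \ref{df}): for every pair of its points the associated geodesic arc must remain inside $K_\alpha$ on some initial interval $[0,u']$ with $u'\in(0,1]$. This is the generalized-invex analogue of the classical fact that sublevel sets of a convex function are convex.

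First I would fix an arbitrary pair $\kappa_1,\kappa_2\in K_\alpha$. By definition of the level set this means $\kappa_1,\kappa_2\in B$ together with $f(\kappa_1)\leq\alpha$ and $f(\kappa_2)\leq\alpha$. Since $f$ is GSLEP on $B$ with respect to $\eta$, Definition \ref{de1} supplies a number $0<v(\kappa_1,\kappa_2)\leq u(\kappa_1,\kappa_2)\leq1$ for which
$$f\left(\gamma_{E(\kappa_1),E(\kappa_2)}(t)\right)\leq t f(\kappa_1)+(1-t)f(\kappa_2),\qquad\forall t\in[0,v(\kappa_1,\kappa_2)],$$
and moreover $B$ is itself a GLEI set, so $\gamma_{E(\kappa_1),E(\kappa_2)}(t)\in B$ for all $t\in[0,u(\kappa_1,\kappa_2)]$.

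The key step is the elementary convex-combination bound: for $t\in[0,1]$ both coefficients $t$ and $1-t$ are nonnegative, hence
$$t f(\kappa_1)+(1-t)f(\kappa_2)\leq t\alpha+(1-t)\alpha=\alpha.$$
Chaining this with the GSLEP inequality gives $f\left(\gamma_{E(\kappa_1),E(\kappa_2)}(t)\right)\leq\alpha$ for every $t\in[0,v(\kappa_1,\kappa_2)]$. Since also $v(\kappa_1,\kappa_2)\leq u(\kappa_1,\kappa_2)$, on the same interval the point $\gamma_{E(\kappa_1),E(\kappa_2)}(t)$ lies in $B$. Combining both facts yields $\gamma_{E(\kappa_1),E(\kappa_2)}(t)\in K_\alpha$ for all $t\in[0,v(\kappa_1,\kappa_2)]$, and taking $v(\kappa_1,\kappa_2)\in(0,1]$ as the witnessing radius shows $K_\alpha$ is a GLEI set with respect to $\eta$.

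There is no serious obstacle; the only point requiring attention is the bookkeeping of the two local radii. One must use the GSLEP radius $v$ rather than the larger $u$, because the preinvexity inequality is only guaranteed on $[0,v]$, whereas membership $\gamma_{E(\kappa_1),E(\kappa_2)}(t)\in B$ holds freely on the larger interval $[0,u]\supseteq[0,v]$. Setting $u'=v$ reconciles both constraints at once, which is exactly why the GSLEP definition was arranged with $v\leq u$.
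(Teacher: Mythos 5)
Your proof is correct and follows the same route as the paper's: both use the GLEI property of $B$ to keep $\gamma_{E(\kappa_1),E(\kappa_2)}(t)$ in $B$, apply the GSLEP inequality on $[0,v(\kappa_1,\kappa_2)]$, and bound $tf(\kappa_1)+(1-t)f(\kappa_2)\leq t\alpha+(1-t)\alpha=\alpha$ to conclude membership in $K_\alpha$. Your additional remark on why the smaller radius $v$ (rather than $u$) must serve as the witness is a point the paper leaves implicit, but the argument is the same.
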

\begin{proof}
	For any $ \alpha\in \mathbb{R}$  $ $ and $ \kappa_{1},\kappa_{2}\in K_{\alpha} $, then $ \kappa_{1},\kappa_{2}\in B $ and $ f(\kappa_{1})\leq\alpha, f(\kappa_{2})\leq\alpha $. Since $ B $ is a GLEI set, then there is a maximal positive number $ u(\kappa_{1},\kappa_{2})\leq1 $ such that $$ \gamma_{E(\kappa_{1}),E(\kappa_{2})}(t)\in B, \ \ \forall t\in\left[0,u(\kappa_{1},\kappa_{2}) \right] .$$ 
	In addition, since $ f $  is  GSLEP, there is a positive number $ v(\kappa_{1},\kappa_{2})\leq u(y_{1},y_{2}) $ such that 
	\begin{eqnarray}
	f\left(\gamma_{E(\kappa_{1}),E(\kappa_{2})}(t) \right)&\leq& t f(\kappa_{1}) +(1-t)f(\kappa_{2})\nonumber\\&\leq& t\alpha+(1-t)\alpha\nonumber\\ &=& \alpha, \ \ \forall t\in\left[0,v(\kappa_{1},\kappa_{2}) \right].\nonumber\end{eqnarray}
	That is , $ \gamma_{E(\kappa_{1}),E(\kappa_{2})}(t)\in K_{\alpha}, \ \ \forall t\in\left[0,v(\kappa_{1},\kappa_{2}) \right] $.
	Therefore, $ K_{\alpha} $ is a GLEI set with respect to $ \eta $ for any $ \alpha \in \mathbb{R} $.
\end{proof}
\begin{theorem}
	Let $ f:B\subset \aleph\longrightarrow \mathbb{R} $ where $ B $ is a GLEI. Then $ f $ is a  GSLEP  function with respect to $ \eta $ iff for each pair of points $ \kappa_{1},\kappa_{2}\in B $, there is a positive number $ v(\kappa_{1},\kappa_{2})\leq u(\kappa_{1},\kappa_{2})\leq 1 $ such that
	$$f\left(\gamma_{E(\kappa_{1}),E(\kappa_{2})}(t) \right) \leq t \alpha +(1-t)\beta , \ \ \forall t\in\left[0,v(\kappa_{1},\kappa_{2}) \right].$$
	\begin{proof}
		Let $ \kappa_{1},\kappa_{2}\in B $ and $ \alpha,\beta\in \mathbb{R} $ such that $ f(\kappa_{1})<\alpha $ and $ f(\kappa_{2})<\beta $. Since $ B $ is GLEI, there is a maximal positive number $ u(\kappa_{1},\kappa_{2})\leq 1 $ such that
		$$\gamma_{E(\kappa_{1}),E(\kappa_{2})}(t) \in B , \ \ \forall t\in\left[0,u(\kappa_{1},\kappa_{2}) \right].$$ 
		In addition, there is a positive number $ v(\kappa_{1},\kappa_{2})\leq u(\kappa_{1},\kappa_{2}) $ where 
		$$f\left(\gamma_{E(\kappa_{1}),E(\kappa_{2})}(t) \right) \leq t \alpha +(1-t)\beta , \ \ \forall t\in\left[0,v(\kappa_{1},\kappa_{2}) \right].$$
		Conversely, let $ (\kappa_{1},\alpha) \in \omega_{f} $ and $ (\kappa_{2},\beta) \in \omega_{f} $, then $ \kappa_{1},\kappa_{2}\in B $, $ f(\kappa_{1})<\alpha $ and $ f(\kappa_{2})<\beta $. Hence, $ f(\kappa_{1})<\alpha+\varepsilon  $ and $ f(\kappa_{2})<\beta+\varepsilon $ hold for any $ \varepsilon>0 $. According to the hypothesis for $ \kappa_{1},\kappa_{2}\in B $, there is a positive number $ v(\kappa_{1},\kappa_{2})\leq u(\kappa_{1},\kappa_{2})\leq 1 $ such that
		$$f\left(\gamma_{E(\kappa_{1}),E(\kappa_{2})}(t) \right) \leq t \alpha +(1-t)\beta+\varepsilon , \ \ \forall t\in\left[0,v(\kappa_{1},\kappa_{2}) \right].$$
		Let $ \varepsilon\longrightarrow0^{+} $, then 
		$$f\left(\gamma_{E(\kappa_{1}),E(\kappa_{2})}(t)\right) \leq t \alpha +(1-t)\beta , \ \ \forall t\in\left[0,v(\kappa_{1},\kappa_{2}) \right].$$
		That is 
		$\left(\gamma_{E(\kappa_{1}),E(\kappa_{2})}(t) , t \alpha +(1-t)\beta\right) \in \omega_{f} , \ \ \forall t\in\left[0,v(\kappa_{1},\kappa_{2}) \right].$\\
		Therefore, $ \omega_{f} $ is a GLEI set corresponding to $ \aleph $.
		From Theorem\ref{th1}, it follows that $ f $ is a GSLEP on $ B $ with respect to $ \eta $.
	\end{proof}	
\end{theorem}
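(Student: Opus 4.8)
The plan is to prove the two implications of the equivalence separately, first fixing the reading of the unquantified symbols $\alpha,\beta$: the statement is to be understood as holding for every pair of reals with $f(\kappa_1)\le\alpha$ and $f(\kappa_2)\le\beta$ (equivalently, for the corresponding points of the epigraph). The forward direction will follow directly from Definition \ref{de1}, while for the converse the cleanest route is to recognise the displayed inequality as an epigraph-membership condition and appeal to Theorem \ref{th1}.

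For the forward implication I would assume $f$ is GSLEP on $B$ and fix $\kappa_1,\kappa_2\in B$ together with reals $\alpha\ge f(\kappa_1)$ and $\beta\ge f(\kappa_2)$. Definition \ref{de1} supplies a number $v(\kappa_1,\kappa_2)\le u(\kappa_1,\kappa_2)\le 1$ with
$$f\left(\gamma_{E(\kappa_1),E(\kappa_2)}(t)\right)\le t f(\kappa_1)+(1-t)f(\kappa_2),\qquad \forall t\in[0,v(\kappa_1,\kappa_2)].$$
Because $t\ge 0$ and $1-t\ge 0$ on this interval, replacing $f(\kappa_1)$ by the larger $\alpha$ and $f(\kappa_2)$ by the larger $\beta$ only enlarges the right-hand side, which yields $f(\gamma_{E(\kappa_1),E(\kappa_2)}(t))\le t\alpha+(1-t)\beta$ and proves this half.

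For the converse I would pass through the epigraph $\omega_f$. Given $(\kappa_1,\alpha),(\kappa_2,\beta)\in\omega_f$ one has $f(\kappa_1)\le\alpha$ and $f(\kappa_2)\le\beta$, so the hypothesis produces $v(\kappa_1,\kappa_2)\le u(\kappa_1,\kappa_2)$ for which $f(\gamma_{E(\kappa_1),E(\kappa_2)}(t))\le t\alpha+(1-t)\beta$ on $[0,v(\kappa_1,\kappa_2)]$; this is precisely the assertion that $\left(\gamma_{E(\kappa_1),E(\kappa_2)}(t),\,t\alpha+(1-t)\beta\right)\in\omega_f$. Hence $\omega_f$ is a GLEI set corresponding to $\aleph$, and Theorem \ref{th1} then gives that $f$ is GSLEP. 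If one prefers to read the hypothesis with strict inequalities $f(\kappa_1)<\alpha$, $f(\kappa_2)<\beta$, the same argument applies after invoking the hypothesis at $\alpha+\varepsilon,\beta+\varepsilon$ and letting $\varepsilon\to 0^+$.

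I do not expect a genuine obstacle here; the one point requiring care is the bookkeeping of the parameter interval, since $u$ and $v$ depend on the chosen pair of points and the same $v(\kappa_1,\kappa_2)$ must serve simultaneously for geodesic membership and for the inequality. Because Definition \ref{de1} already enforces $v(\kappa_1,\kappa_2)\le u(\kappa_1,\kappa_2)$ and Theorem \ref{th1} bundles both facts into a single epigraph condition, this is handled automatically, and the proof reduces to the two short monotonicity arguments above once the intended quantification of $\alpha,\beta$ is fixed.
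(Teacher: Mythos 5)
Your proposal is correct and follows essentially the same route as the paper: the forward direction from Definition \ref{de1} by enlarging $f(\kappa_{1}),f(\kappa_{2})$ to $\alpha,\beta$, and the converse by showing $\omega_{f}$ is a GLEI set corresponding to $\aleph$ and invoking Theorem \ref{th1}, including the $\varepsilon\to 0^{+}$ device for the strict-inequality reading. You in fact spell out two points the paper leaves implicit --- the quantification of $\alpha,\beta$ and the monotonicity step in the forward direction --- but these are clarifications, not a different argument.
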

\section{Optimality Criteria}
\hskip0.6cm
  
   \vspace{1.0mm} 
In this section, let us consider the nonlinear fractional multiobjective programming problem such as :\\
\begin{eqnarray*}
(VFP)  \begin{cases}
	minimize \frac{f(\kappa)}{g(\kappa)}=\left(\frac{f_{1}(\kappa)}{g_{1}(\kappa)},\cdots,\frac{f_{p}(\kappa)}{g_{p}(\kappa)} \right),\\  subject \ \ to\ \ h_{j}(\kappa)\leq0, j\in Q={1,2,\cdots q}  \\ \kappa\in K_{0}
	\end{cases}
\end{eqnarray*}
where $ K_{0}\subset \aleph $ is a  GLEI  set and $ g_{i}(\kappa)>0, \forall  \kappa\in K_{0} , i\in P={1,2,\cdots, p} $.

  \vspace{1.0mm}
Let $ f=(f_{1},f_{2},\cdots, f_{p}), g=(g_{1},g_{2},\cdots,g_{p}) $ and $ h=(h_{1},h_{2},\cdots,h_{q}) $\\ and denote $ K=\left\lbrace \kappa:h_{j}(\kappa)\leq 0, j\in Q, \kappa\in K_{0}\right\rbrace  $, the feasible set of problem ($ VFP $).\\ For $ \kappa^{*}\in K $, we put $ Q(\kappa^{*})=\left\lbrace j:h_{j}(\kappa^{*})= 0, j\in Q \right\rbrace $, $ L(\kappa^{*})=\frac{Q}{Q(\kappa^{*})} $.

  \vspace{1.0mm}
We also formulate the nonlinear multiobjective programming problem as follows:
\begin{eqnarray*}
	(VFP_{\lambda})  \begin{cases}
		minimize \left( f_{1}(\kappa)-\lambda_{1}g_{1}(\kappa),\cdots f_{p}(\kappa)-\lambda_{p}g_{p}(\kappa) \right),\\  subject \ \ to\ \ h_{j}(\kappa)\leq0, j\in Q={1,2,\cdots q}  \\ \kappa\in K_{0}
	\end{cases}
\end{eqnarray*}
where $  \lambda=(\lambda_{1},\lambda_{2},\cdots ,\lambda_{p})\in \mathbb{R}^{p} $.
  \vspace{1.0mm}
The followinng lemma connects the weak efficient solutions for ($ VFP $) and ($ VFP_{\lambda} $).
\begin{lemma}\label{Lemma1}
	A point $ \kappa^{*} $ is a weak efficient solution for ($ VFP_{\lambda} $) iff $ \kappa^{*} $ is a weak efficient solution for ($ VFP^{*}_{\lambda} $), where $ \lambda^{*}=(\lambda^{*}_{1},\cdots,\lambda^{*}_{p} )=\left(\frac{f_{1}(\kappa^{*})}{g_{1}(\kappa^{*})},\cdots,\frac{f_{p}(\kappa^{*})}{g_{p}(\kappa^{*})} \right) $.
\end{lemma}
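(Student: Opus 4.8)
The plan is to prove the equivalence by contraposition, unpacking the definition of a weak efficient solution and exploiting the standing sign hypothesis $g_i(\kappa)>0$ on $K_0$. Recall that a point fails to be a weak efficient solution of a vector minimization $\min(\phi_1,\dots,\phi_p)$ over a feasible set $K$ exactly when some feasible $\bar\kappa\in K$ satisfies $\phi_i(\bar\kappa)<\phi_i(\kappa^{*})$ for every $i\in P$. The single computation that drives the whole argument is that, for the parameter $\lambda^{*}$ with $\lambda^{*}_i=f_i(\kappa^{*})/g_i(\kappa^{*})$, the $i$-th component of the $(VFP_{\lambda^{*}})$ objective vanishes at $\kappa^{*}$:
$$f_i(\kappa^{*})-\lambda^{*}_i g_i(\kappa^{*})=f_i(\kappa^{*})-\frac{f_i(\kappa^{*})}{g_i(\kappa^{*})}\,g_i(\kappa^{*})=0 .$$
Thus weak efficiency of $\kappa^{*}$ for $(VFP_{\lambda^{*}})$ is equivalent to the nonexistence of a feasible $\bar\kappa$ with $f_i(\bar\kappa)-\lambda^{*}_i g_i(\bar\kappa)<0$ for all $i$, and the task reduces to matching this with the strict-domination condition for the fractional problem $(VFP)$.

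First I would handle the contrapositive of the ``only if'' direction. Assuming $\kappa^{*}$ is not weak efficient for $(VFP)$, some feasible $\bar\kappa$ gives $f_i(\bar\kappa)/g_i(\bar\kappa)<\lambda^{*}_i$ for all $i\in P$; since $g_i(\bar\kappa)>0$ I may clear the denominator without reversing the inequality to obtain $f_i(\bar\kappa)-\lambda^{*}_i g_i(\bar\kappa)<0=f_i(\kappa^{*})-\lambda^{*}_i g_i(\kappa^{*})$ for every $i$, so $\bar\kappa$ strictly dominates $\kappa^{*}$ in $(VFP_{\lambda^{*}})$. For the converse I would run the same chain backwards: if a feasible $\bar\kappa$ satisfies $f_i(\bar\kappa)-\lambda^{*}_i g_i(\bar\kappa)<0$ for all $i$, dividing by $g_i(\bar\kappa)>0$ yields $f_i(\bar\kappa)/g_i(\bar\kappa)<\lambda^{*}_i=f_i(\kappa^{*})/g_i(\kappa^{*})$ for all $i$, contradicting weak efficiency of $\kappa^{*}$ for $(VFP)$. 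Combining the two contrapositives delivers the stated equivalence.

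I expect no serious obstacle here, since the argument is purely algebraic once weak efficiency is written out in terms of strict componentwise domination. The one point that genuinely needs care is that each multiplication or division step is performed against a strictly positive factor $g_i(\bar\kappa)$, guaranteed by the hypothesis $g_i>0$ on $K_0\supseteq K$, so that all strict inequalities are preserved in both directions. The only conceptual subtlety worth flagging is that the equivalence is anchored at the specific parameter $\lambda^{*}$ generated by $\kappa^{*}$ itself: it is precisely the vanishing of the $(VFP_{\lambda^{*}})$ objective at $\kappa^{*}$ that makes the two strict-domination conditions literally coincide, which is why the reduction works for this $\lambda^{*}$ and not for an arbitrary $\lambda$.
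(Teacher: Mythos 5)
Your proof is correct and follows essentially the same route as the paper's: both directions hinge on the identity $f_i(\kappa^{*})-\lambda^{*}_i g_i(\kappa^{*})=0$ together with the positivity of $g_i$ to pass between the fractional domination $f_i(\bar\kappa)/g_i(\bar\kappa)<\lambda^{*}_i$ and the parametric one $f_i(\bar\kappa)-\lambda^{*}_i g_i(\bar\kappa)<0$, exactly as in the paper (which argues by contradiction rather than your equivalent contraposition). Your version is in fact slightly cleaner, since the paper's displayed intermediate inequality contains a typo (the factor $g_i(\kappa)$ misplaced in the denominator) that your explicit clearing-of-denominators step avoids.
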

\begin{proof}
	Assume that there is a feasible point $ \kappa\in K $, where 
	$$f_{i}(\kappa)-\lambda^{*}_{i}g_{i}(\kappa)<f_{i}(\kappa^{*})-\lambda^{*}_{i}g_{i}(\kappa^{*}),\forall i\in Q $$
	$ \Longrightarrow  $$$f_{i}(\kappa)<\frac{f_{i}(\kappa^{*})}{g_{i}(\kappa^{*})g_{i}(\kappa)}$$
	$ \Longrightarrow $ $$\frac{f_{i}(\kappa)}{g_{i}(\kappa)}<\frac{f_{i}(\kappa^{*})}{g_{i}(\kappa^{*})},$$
	which is a contradiction the weak efficiency of $ \kappa^{*} $ for ($ VFP $). 
	
	  \vspace{1.0mm}
	Now let us take $ \kappa\in K $ as a feasible point such that
	$$\frac{f_{i}(\kappa)}{g_{i}(\kappa)}<\frac{f_{i}(\kappa^{*})}{g_{i}(\kappa^{*})}= \lambda^{*}_{i},$$ then $ f_{i}(\kappa)-\lambda^{*}_{i}g_{i}(\kappa)<0=f_{i}(\kappa^{*})-\lambda^{*}_{i}g_{i}(\kappa^{*}), \forall i\in Q $,  which is agian contradiction to the weak efficiency of $ \kappa^{*} $ for ($ VFP^{*}_{\lambda} $). 
\end{proof} 
Next, some sufficient optimality conditions for the problem ($ VFP $) are established.
\begin{theorem}\label{th2}
Let $ \bar{\kappa}\in K, E(\bar{\kappa})=\bar{\kappa} $ and $ f,h $ be  GSLEP  and $ g $ be a  geodesic semilocal E-preincave, and they are all geodesic E-$ \eta $- semidifferentiable at $ \bar{\kappa} $. Further, assume that there are $ \zeta^{o}=\left(\zeta^{o}_{i}, i=1,\cdots,p \right)\in\mathbb{R}^{p}  $ and $ \xi^{o}=\left(\xi^{o}_{j}, j=1,\cdots,m \right)\in\mathbb{R}^{m}  $ such that 
\begin{equation}\label{eq4}
\zeta^{o}_{i}f'_{i+}\left(\gamma_{\bar{\kappa},E(\widehat{\kappa})}(t) \right)+\xi^{o}_{j} h'_{j+}\left(\gamma_{\bar{\kappa},E(\widehat{\kappa})}(t) \right)\geqslant 0\forall \kappa\in K, t\in[0,1], 
\end{equation}
\begin{equation}\label{eq5}
g'_{i+}\left(\gamma_{\bar{\kappa},E(\kappa)}(t) \right)\leq 0, \forall \kappa\in K, i\in P,
\end{equation}
\begin{equation}\label{eq6}
\xi^{o}h(\bar{\kappa})=0
\end{equation}
\begin{equation}\label{eq7}
\zeta^{o}\geqslant 0 , \xi^{o}\geqslant 0.
\end{equation}
Then $ \bar{\kappa} $ is a weak efficient solution for ($ VFP $).
\end{theorem}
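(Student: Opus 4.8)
The plan is to argue by contradiction, first reducing the fractional problem $(VFP)$ to the parametric problem $(VFP_{\lambda^*})$ through Lemma \ref{Lemma1}, and then converting value inequalities into inequalities on the geodesic E-$\eta$-semidifferentials by Lemma \ref{lemma2}. Set $\lambda^*_i = f_i(\bar{\kappa})/g_i(\bar{\kappa})$, so that $f_i(\bar{\kappa}) - \lambda^*_i g_i(\bar{\kappa}) = 0$ for every $i \in P$. Suppose, for contradiction, that $\bar{\kappa}$ is not a weak efficient solution of $(VFP)$; by Lemma \ref{Lemma1} it is then not a weak efficient solution of $(VFP_{\lambda^*})$, so there is a feasible $\kappa \in K$ with
\begin{equation*}
f_i(\kappa) - \lambda^*_i g_i(\kappa) < f_i(\bar{\kappa}) - \lambda^*_i g_i(\bar{\kappa}) = 0, \qquad \forall i \in P.
\end{equation*}

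Next I would pass to the semidifferentials at $\bar{\kappa}$, where $E(\bar{\kappa}) = \bar{\kappa}$. Since each $f_i$ is GSLEP and each $g_i$ is geodesic semilocal E-preincave, the two cases of Lemma \ref{lemma2}(1) give
\begin{equation*}
f'_{i+}\!\left(\gamma_{\bar{\kappa},E(\kappa)}(t)\right) \leq f_i(\kappa) - f_i(\bar{\kappa}), \qquad g'_{i+}\!\left(\gamma_{\bar{\kappa},E(\kappa)}(t)\right) \geq g_i(\kappa) - g_i(\bar{\kappa}).
\end{equation*}
Using $\lambda^*_i \geq 0$ (valid whenever $f_i(\bar{\kappa}) \geq 0$, as is standard for the fractional objective with $g_i > 0$), I multiply the second inequality by $-\lambda^*_i \leq 0$ and add, obtaining
\begin{equation*}
f'_{i+}\!\left(\gamma_{\bar{\kappa},E(\kappa)}(t)\right) - \lambda^*_i g'_{i+}\!\left(\gamma_{\bar{\kappa},E(\kappa)}(t)\right) \leq \bigl(f_i(\kappa) - \lambda^*_i g_i(\kappa)\bigr) - \bigl(f_i(\bar{\kappa}) - \lambda^*_i g_i(\bar{\kappa})\bigr) < 0.
\end{equation*}
Here the hypothesis \eqref{eq5} is decisive: since $g'_{i+}(\gamma_{\bar{\kappa},E(\kappa)}(t)) \leq 0$ and $\lambda^*_i \geq 0$, we have $-\lambda^*_i g'_{i+} \geq 0$, so the displayed bound forces $f'_{i+}(\gamma_{\bar{\kappa},E(\kappa)}(t)) < 0$ for every $i \in P$. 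This is the step that eliminates the $g$-derivative, which does not appear in \eqref{eq4}.

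Finally I would assemble the contradiction with \eqref{eq4}. Because $\zeta^o \geq 0$ and not identically zero, summing $\zeta^o_i f'_{i+} < 0$ over $i$ yields $\sum_i \zeta^o_i f'_{i+}(\gamma_{\bar{\kappa},E(\kappa)}(t)) < 0$. For the constraints, feasibility gives $h_j(\kappa) \leq 0$, while Lemma \ref{lemma2}(1) applied to the GSLEP functions $h_j$ gives $h'_{j+}(\gamma_{\bar{\kappa},E(\kappa)}(t)) \leq h_j(\kappa) - h_j(\bar{\kappa})$; multiplying by $\xi^o_j \geq 0$, summing, and invoking the complementarity relation \eqref{eq6} in the form $\sum_j \xi^o_j h_j(\bar{\kappa}) = 0$ gives $\sum_j \xi^o_j h'_{j+} \leq \sum_j \xi^o_j h_j(\kappa) \leq 0$. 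Adding the two estimates produces
\begin{equation*}
\sum_i \zeta^o_i f'_{i+}\!\left(\gamma_{\bar{\kappa},E(\kappa)}(t)\right) + \sum_j \xi^o_j h'_{j+}\!\left(\gamma_{\bar{\kappa},E(\kappa)}(t)\right) < 0,
\end{equation*}
contradicting \eqref{eq4}. The main obstacle I anticipate is the sign bookkeeping that lets \eqref{eq5} cleanly remove the $g$-term while keeping the inequality strict; the argument also tacitly needs $\zeta^o \neq 0$ (for strictness of the $f$-sum) and $\lambda^*_i \geq 0$, and I would state these explicitly since they are only implicit in \eqref{eq4}--\eqref{eq7}.
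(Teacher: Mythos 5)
Your proposal is correct, but it assembles the same ingredients in the opposite logical direction from the paper, so the two are worth contrasting. The paper also argues by contradiction from a feasible $\widehat{\kappa}$ satisfying \eqref{eq8}, and it uses the same semidifferential estimates (its citation of Lemma \ref{Lemma1} at that point is evidently a slip for Lemma \ref{lemma2}); but it then treats \eqref{eq4} as an \emph{input}: multiplying \eqref{eq9} by $\zeta^{o}_{i}$ and \eqref{eq11} by $\xi^{o}_{j}$ and summing, it obtains $\sum_{i}\zeta^{o}_{i}\left(f_{i}(\widehat{\kappa})-f_{i}(\bar{\kappa})\right)\geqslant 0$, extracts an index $i_{0}$ with $f_{i_{0}}(\widehat{\kappa})\geqslant f_{i_{0}}(\bar{\kappa})$, separately deduces $g_{i}(\widehat{\kappa})\leq g_{i}(\bar{\kappa})$ from \eqref{eq5} and \eqref{eq10} at the level of function values, and contradicts the fractional improvement \eqref{eq8}. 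You instead make genuine use of the parametric reduction of Lemma \ref{Lemma1} (the paper's chain never passes through the combined functions $f_{i}-\lambda^{*}_{i}g_{i}$), keep everything at the derivative level, use \eqref{eq5} to eliminate the $g$-term and force $f'_{i+}\left(\gamma_{\bar{\kappa},E(\kappa)}(t)\right)<0$ for every $i$, and derive a contradiction with the stationarity condition \eqref{eq4} itself (read, as in the paper's own display \eqref{eq12}, with summation over $i$ and $j$). A merit of your arrangement is that it surfaces explicitly two hypotheses the paper uses silently: the requirement $\zeta^{o}\neq 0$, without which the paper cannot pass from $\sum_{i}\zeta^{o}_{i}\left(f_{i}(\widehat{\kappa})-f_{i}(\bar{\kappa})\right)\geqslant 0$ to the existence of $i_{0}$ in \eqref{eq14}, and nonnegativity of the objective ($\lambda^{*}_{i}\geqslant 0$ in your version; in the paper, the final inference from \eqref{eq14} and \eqref{eq15} to \eqref{eq16} likewise fails if $f_{i_{0}}(\bar{\kappa})<0$, since dividing a negative numerator by a smaller positive denominator decreases the fraction). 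The paper's value-level route, for its part, invokes \eqref{eq4} only once and never needs the combined functions, which makes it marginally shorter; both proofs are otherwise of equal strength.
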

\begin{proof}
	By contradiction, let $ \bar{\kappa} $ be not a weak efficient solution for ($ VFP $), then there exist a point $ \widehat{\kappa}\in K $ such that
	 \begin{equation}\label{eq8}
	\frac{f_{i}(\widehat{\kappa})}{g_{i}(\widehat{\kappa})}<\frac{f_{i}(\bar{\kappa})}{g_{i}(\bar{\kappa})}, i\in P.
	\end{equation}
	By the above hypotheses and Lemma \ref{Lemma1}, we have 
	\begin{equation}\label{eq9}
	f_{i}(\widehat{\kappa})-f_{i}(\bar{\kappa})\geqslant f'_{i+}\left(\gamma_{\bar{\kappa},E(\widehat{\kappa})}(t)\right) , i\in P
	\end{equation}
	\begin{equation}\label{eq10}
	g_{i}(\widehat{\kappa})-g_{i}(\bar{\kappa})\leq g'_{i+}\left(\gamma_{\bar{\kappa},E(\widehat{\kappa})}(t)\right) , i\in P
	\end{equation}
	\begin{equation}\label{eq11}
	h_{i}(\widehat{\kappa})-h_{i}(\bar{\kappa})\geqslant h'_{j+}\left(\gamma_{\bar{\kappa},E(\widehat{\kappa})}(t)\right) , j\in Q.
	\end{equation}
	Multiplying (\ref{eq9}) by $ \zeta^{o}_{i} $ and (\ref{eq11}) by $ \xi^{o}_{j} $, then we get
	\begin{eqnarray}\label{eq12}&&
	\sum_{i=1}^{p} \zeta^{o}_{i} \left(f_{i}(\widehat{\kappa})-f_{i}(\bar{\kappa}) \right) + \sum_{j=1}^{m} \xi^{o}_{j} \left(h_{j}(\widehat{\kappa})-h_{j}(\bar{\kappa}) \right)\nonumber\\&&\hspace{0.5in} \geqslant  \zeta^{o}_{i} f'_{i+}\left(\gamma_{\bar{\kappa},E(\widehat{\kappa})}(t)\right) +\xi^{o}_{j} h'_{j+}\left(\gamma_{\bar{\kappa},E(\widehat{\kappa})}(t)\right) \geqslant 0.
	\end{eqnarray}
Since $ \widehat{\kappa}\in K, \xi^{o}\geqslant 0 $ by (\ref{eq6}) and (\ref{eq12}), we have 
\begin{equation}\label{eq13}
\sum_{i=1}^{p} \zeta^{o}_{i} \left(f_{i}(\widehat{\kappa})-f_{i}(\bar{\kappa}) \right)\geqslant 0.
\end{equation}
Utilizing (\ref{eq7}) and (\ref{eq13}),then there is at least an $ i_{0} $ ($ 1\leq i_{0}\leq p $) such that 
\begin{equation}\label{eq14}
f_{i_{0}}(\widehat{\kappa})\geqslant f_{i_{0}}(\bar{\kappa}).
\end{equation}
On the other hand, (\ref{eq5}) and (\ref{eq10}) imply 
\begin{equation}\label{eq15}
g_{i}(\widehat{\kappa})\leq g_{i}(\bar{\kappa}), i\in P.
\end{equation}
By using (\ref{eq14}), (\ref{eq15}) and $ g>0 $, we have
\begin{equation}\label{eq16}
\frac{f_{i_{0}}(\widehat{\kappa})}{g_{i_{0}}(\widehat{\kappa})}\geqslant\frac{f_{i_{0}}(\bar{\kappa})}{g_{i_{0}}(\bar{\kappa})},
\end{equation}
which is a contradition with \ref{eq8}, then the proof of throrem is completed.
\end{proof}
Similarly we can prove the next theorem:
\begin{theorem}
	Consider that  $ \bar{\kappa}\in B, E(\bar{\kappa})=\bar{\kappa} $ and $ f,h $ are geodesic E-$ \eta $- semidifferentiable at $ \bar{\kappa} $. If there exist $ \zeta^{o}\in \mathbb{R}^{n} $ and $ \xi^{o}\in \mathbb{R}^{m} $ such that condition (\ref{eq4})-(\ref{eq7}) hold and $ \zeta^{o}f(x)+\xi^{o}h(x) $ is a GSLEP function, then $ \bar{\kappa} $ is a weak efficient solution for ($ VFP $).
\end{theorem}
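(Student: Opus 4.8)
The plan is to argue by contradiction and to follow the structure of the proof of Theorem \ref{th2}, the only genuine change being how the central inequality (\ref{eq12}) is obtained. Suppose $ \bar{\kappa} $ is \emph{not} a weak efficient solution for $ (VFP) $. Then there is a feasible point $ \widehat{\kappa}\in K $ satisfying (\ref{eq8}), namely $ \frac{f_{i}(\widehat{\kappa})}{g_{i}(\widehat{\kappa})}<\frac{f_{i}(\bar{\kappa})}{g_{i}(\bar{\kappa})} $ for all $ i\in P $. I will derive a contradiction with this inequality.

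First I would record that, because $ f $ and $ h $ are geodesic E-$ \eta $-semidifferentiable at $ \bar{\kappa} $ and the semidifferential in Definition \ref{de4} is defined through a one-sided limit, it is linear in the function being differentiated; hence the single function $ \psi:=\zeta^{o}f+\xi^{o}h $ is also geodesic E-$ \eta $-semidifferentiable at $ \bar{\kappa} $, with
$$ \psi'_{+}\!\left(\gamma_{\bar{\kappa},E(\widehat{\kappa})}(t)\right)=\sum_{i=1}^{p}\zeta^{o}_{i}f'_{i+}\!\left(\gamma_{\bar{\kappa},E(\widehat{\kappa})}(t)\right)+\sum_{j=1}^{m}\xi^{o}_{j}h'_{j+}\!\left(\gamma_{\bar{\kappa},E(\widehat{\kappa})}(t)\right). $$
Since $ \psi $ is assumed to be GSLEP, Lemma \ref{lemma2}(1) applied to $ \psi $ gives $ \psi(\widehat{\kappa})-\psi(\bar{\kappa})\geqslant \psi'_{+}(\gamma_{\bar{\kappa},E(\widehat{\kappa})}(t)) $. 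By hypothesis (\ref{eq4}) the right-hand side is nonnegative, so
$$ \sum_{i=1}^{p}\zeta^{o}_{i}\bigl(f_{i}(\widehat{\kappa})-f_{i}(\bar{\kappa})\bigr)+\sum_{j=1}^{m}\xi^{o}_{j}\bigl(h_{j}(\widehat{\kappa})-h_{j}(\bar{\kappa})\bigr)\geqslant 0, $$
which is precisely (\ref{eq12}). This single step replaces the two separate applications of Lemma \ref{lemma2} to $ f $ and to $ h $ used in Theorem \ref{th2}.

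From here the argument is identical to that of Theorem \ref{th2}. Using the complementarity condition (\ref{eq6}), the feasibility $ h_{j}(\widehat{\kappa})\leq 0 $, and $ \xi^{o}\geqslant 0 $ from (\ref{eq7}), the $ h $-sum above is nonpositive, so (\ref{eq12}) collapses to (\ref{eq13}), that is $ \sum_{i}\zeta^{o}_{i}(f_{i}(\widehat{\kappa})-f_{i}(\bar{\kappa}))\geqslant 0 $. Combined with $ \zeta^{o}\geqslant 0 $, this forces at least one index $ i_{0} $ with $ f_{i_{0}}(\widehat{\kappa})\geqslant f_{i_{0}}(\bar{\kappa}) $, which is (\ref{eq14}). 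The bound (\ref{eq5}) on $ g'_{+} $, together with the geodesic semilocal E-preincavity of $ g $ (used via the preconcave case of Lemma \ref{lemma2}(1)), yields (\ref{eq15}), i.e. $ g_{i}(\widehat{\kappa})\leq g_{i}(\bar{\kappa}) $. Combining (\ref{eq14}), (\ref{eq15}) and $ g>0 $ produces (\ref{eq16}), which contradicts (\ref{eq8}); hence $ \bar{\kappa} $ must be a weak efficient solution for $ (VFP) $.

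The step I expect to be the main obstacle is the first one: one must verify that the geodesic E-$ \eta $-semidifferential of the linear combination $ \zeta^{o}f+\xi^{o}h $ really splits as the corresponding linear combination of the individual semidifferentials, so that applying Lemma \ref{lemma2}(1) to the single GSLEP function $ \psi $ reproduces exactly the expression appearing in (\ref{eq4}). Once this linearity is in hand, the remainder is routine bookkeeping with the sign conditions (\ref{eq6})--(\ref{eq7}) and the positivity of $ g $.
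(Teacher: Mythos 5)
Your proposal is correct and is precisely the argument the paper intends: the paper omits this proof entirely (saying only ``Similarly we can prove the next theorem''), and your adaptation of Theorem \ref{th2} --- replacing the two separate applications of Lemma \ref{lemma2}(1) to $f$ and to $h$ by a single application to the GSLEP combination $\psi=\zeta^{o}f+\xi^{o}h$, justified by the linearity of the one-sided limit defining the geodesic E-$\eta$-semidifferential, with the rest of the bookkeeping via (\ref{eq6})--(\ref{eq7}), (\ref{eq5}), and $g>0$ unchanged --- is exactly that similar proof. You were also right to flag and supply the hypotheses on $g$ (geodesic semilocal E-preincavity and geodesic E-$\eta$-semidifferentiability at $\bar{\kappa}$), which the theorem's statement omits but which are implicitly required by (\ref{eq5}) and by the step yielding (\ref{eq15}), just as in Theorem \ref{th2}.
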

\begin{theorem}
	Consider $ \bar{\kappa}\in B, E(\bar{\kappa})=\bar{\kappa} $ and $ \lambda_{i}^{o}=\frac{f_{i}(\bar{\kappa})}{g_{i}(\bar{\kappa})}(i\in P) $ are all pSLGEP functions and $ h_{j}(\kappa)(j\in \aleph(\bar{\kappa})) $ are all  GqSLEP functions and $ f,g,h $ are all geodesic E-$ \eta $-semidifferentiable at $ \bar{\kappa} $. If there is $ \zeta^{o}\in \mathbb{R}^{p} $ and $ \xi^{o}\in \mathbb{R}^{m} $ such that 
	\begin{eqnarray}\label{eq17}
	\sum_{i=1}^{p}\zeta_{i}^{o}\left(f'_{i+}\left( \gamma_{\bar{\kappa},E(\kappa)}(t)\right) -\lambda_{i}^{o}g'_{i+}\left( \gamma_{\bar{\kappa},E(\kappa)}(t)\right)  \right) +\xi^{o}h'_{i+}\left( \gamma_{\bar{\kappa},E(\kappa)}(t)\right)  \geqslant 0
	\end{eqnarray}
	\begin{eqnarray}\label{eq18}
	\xi^{o}h(\bar{\kappa})=0,
	\end{eqnarray}
	\begin{equation}\label{eq19}
	\zeta^{o}\geqslant 0, \xi^{o}\geqslant 0,
	\end{equation}
	then $ \bar{\kappa} $ is a weak efficient solution for ($ VFP $).
\end{theorem}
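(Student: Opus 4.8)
The plan is to argue by contradiction, mirroring the structure of the proof of Theorem \ref{th2} but invoking the pseudo- and quasi-versions of Lemma \ref{lemma2}(2) in place of the plain gradient inequality used there. Suppose $\bar{\kappa}$ were not a weak efficient solution for ($VFP$). Then, passing through Lemma \ref{Lemma1}, there would be a feasible point $\widehat{\kappa}\in K$ with $\frac{f_i(\widehat{\kappa})}{g_i(\widehat{\kappa})} < \frac{f_i(\bar{\kappa})}{g_i(\bar{\kappa})} = \lambda_i^o$ for every $i\in P$. Using $g_i>0$ and $E(\bar{\kappa})=\bar{\kappa}$, this rewrites as $f_i(\widehat{\kappa}) - \lambda_i^o g_i(\widehat{\kappa}) < 0 = f_i(\bar{\kappa}) - \lambda_i^o g_i(\bar{\kappa})$; that is, the combined function $\phi_i := f_i - \lambda_i^o g_i$ satisfies $\phi_i(\widehat{\kappa}) < \phi_i(\bar{\kappa})$.

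Next I would feed this strict inequality into the pseudo-part of Lemma \ref{lemma2}(2). Since each $\phi_i$ is GpSLEP and geodesic E-$\eta$-semidifferentiable at $\bar{\kappa}$, the lemma yields $f'_{i+}(\gamma_{\bar{\kappa},E(\widehat{\kappa})}(t)) - \lambda_i^o g'_{i+}(\gamma_{\bar{\kappa},E(\widehat{\kappa})}(t)) < 0$ for every $i\in P$. Multiplying by $\zeta_i^o\geq 0$ (from \ref{eq19}) and summing gives $\sum_{i=1}^p \zeta_i^o\bigl(f'_{i+} - \lambda_i^o g'_{i+}\bigr) < 0$, where the strictness relies on $\zeta^o$ being nonzero so that at least one $\zeta_i^o>0$ keeps the corresponding strictly negative term alive after summation.

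For the constraint part I would restrict attention to the active indices $j\in Q(\bar{\kappa})$. Feasibility of $\widehat{\kappa}$ gives $h_j(\widehat{\kappa})\leq 0 = h_j(\bar{\kappa})$, hence $h_j(\widehat{\kappa})\leq h_j(\bar{\kappa})$; the quasi-part of Lemma \ref{lemma2}(2) applied to the GqSLEP functions $h_j$ then yields $h'_{j+}(\gamma_{\bar{\kappa},E(\widehat{\kappa})}(t))\leq 0$. The inactive indices are removed by the complementarity condition \ref{eq18}: since $\xi_j^o\geq 0$ and $h_j(\bar{\kappa})<0$ for $j\in L(\bar{\kappa})$, the relation $\xi^o h(\bar{\kappa})=0$ forces $\xi_j^o=0$ there. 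Consequently $\sum_j \xi_j^o h'_{j+}(\gamma_{\bar{\kappa},E(\widehat{\kappa})}(t)) \leq 0$.

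Adding the two estimates produces $\sum_{i=1}^p \zeta_i^o\bigl(f'_{i+} - \lambda_i^o g'_{i+}\bigr) + \sum_j \xi_j^o h'_{j+} < 0$, which contradicts hypothesis \ref{eq17}, completing the argument. The main obstacle is not any single computation but the bookkeeping that makes the contradiction strict: one must guarantee $\zeta^o\neq 0$ so that the pseudo-inequality survives summation, and one must discard the inactive constraints via complementarity so that the quasi-estimate covers exactly the surviving terms. I would flag the nonzero-multiplier requirement explicitly, since the bare hypothesis $\zeta^o\geq 0$ in \ref{eq19} would otherwise permit the degenerate case $\zeta^o=0$ in which the strict inequality collapses.
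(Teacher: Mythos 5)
Your proposal is correct and follows essentially the same path as the paper's own proof: contradiction, the pseudo-part of Lemma \ref{lemma2} applied to $f_i-\lambda_i^o g_i$, the quasi-part applied to the active constraints $h_j$, complementarity via (\ref{eq18}) to annihilate the inactive multipliers, and summation to contradict (\ref{eq17}). Your explicit flag that $\zeta^o\neq 0$ is required for the strict inequality to survive the summation is a genuine refinement: the paper's own step from $\zeta^o\geqslant 0$ to the strict inequality (\ref{eq20}) silently assumes a nonzero multiplier, which is exactly the degenerate case you identify.
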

\begin{proof}
	 Assume that $ \bar{\kappa} $ is not a weak efficient solution for ($ VFP $). Therefore, there exists $ \kappa^{*}\in B $, yields 
	$$\frac{f_{i}(\kappa^{*})}{g_{i}(\kappa^{*})}<\frac{f_{i}(\bar{\kappa})}{g_{i}(\bar{\kappa})}.$$
Then
$$f_{i}(\kappa^{*})-\lambda_{i}^{o}g_{i}(\kappa^{*})<0,\ \ \ i\in P,$$ which means that 
$$f_{i}(\kappa^{*})-\lambda_{i}^{o}g_{i}(\kappa^{*})< f_{i}(\bar{\kappa})-\lambda_{i}^{o}g_{i}(\bar{\kappa})<0,\ \ \ i\in P. $$

By the pSLGEP  of $ \left( f_{i}(\kappa)-\lambda_{i}^{o}g_{i}(\kappa)\right) (i\in P) $ and Lemma \ref{lemma2}, we have 
 $$ f'_{i+}\left( \gamma_{\bar{\kappa},E(\kappa)}(t)\right) -\lambda_{i}^{o}g'_{i+}\left( \gamma_{\bar{\kappa},E(\kappa)}(t)\right) ,\ \ \ i\in P.  $$
 Utilizing $\zeta^{o}\geqslant 0  $, then
 \begin{equation}\label{eq20}
 \sum_{i=1}^{p}\zeta_{i}^{o}\left(f'_{i+}\left( \gamma_{\bar{\kappa},E(\kappa)}(t)\right) -\lambda_{i}^{o}g'_{i+}\left( \gamma_{\bar{\kappa},E(\kappa)}(t)\right)  \right)< 0.
 \end{equation}
 For $ h(\kappa^{*})\leq 0 $ and $ h_{j}(\bar{\kappa})= 0,\ \ \ j\in \aleph(\bar{\kappa}) $ , we have $ h_{j}(\kappa^{*})\leq h_{j}(\bar{\kappa}),\ \ \ \forall j\in \aleph(\bar{\kappa}). $
 
 By the GqSLEP of $ h_{j} $ and Lemma \ref{lemma2}, we have
 $$h_{j+}\left( \gamma_{\bar{\kappa},E(\kappa)}(t)\right) \leq 0,\ \ \ \forall j\in \aleph(\bar{\kappa}). $$
 Considering $ \xi^{o}\geqslant 0 $ and $ \xi_{j}^{o}= 0,\ \ \  j\in \aleph(\bar{\kappa}),  $ then
 \begin{equation}\label{eq21}
 \sum_{j=1}^{m}\xi_{j}^{o}h'_{j+}\left( \gamma_{\bar{\kappa},E(\kappa^{*})}(t)\right) \leq 0.
 \end{equation}
 Hence, by (\ref{eq20}) and (\ref{eq21}), we have 
 \begin{eqnarray}
 \sum_{i=1}^{p}\zeta_{i}^{o}\left(f'_{i+}\left( \gamma_{\bar{\kappa},E(\kappa^{*})}(t)\right) -\lambda_{i}^{o}g'_{i+}\left( \gamma_{\bar{\kappa},E(\kappa^{*})}(t)\right)  \right) +\xi^{o}h'_{i+}\left( \gamma_{\bar{\kappa},E(\kappa^{*})}(t)\right)  < 0,\nonumber\\
 \end{eqnarray}
 which is contradiction with relation (\ref{eq17}) at $ \kappa^{*}\in B $. 
 Therefore, $ \bar{\kappa} $ is a weak efficient solution for ($ VFP $).
\end{proof}
\begin{theorem}
Consider $ \bar{\kappa}\in B, E(\bar{\kappa})=\bar{\kappa} $ and $ \lambda_{i}^{o}=\frac{f_{i}(\bar{\kappa})}{g_{i}(\bar{\kappa})}(i\in P) $. Also, assume that  $ f,g,h $ are geodesic E-$ \eta $-semidifferentiable at $ \bar{\kappa} $. If there is $ \zeta^{o}\in \mathbb{R}^{p} $ and $ \xi^{o}\in \mathbb{R}^{m} $ such that the conditions (\ref{eq17})-(\ref{eq19}) hold and $ \sum_{i=1}^{p}\zeta^{o}_{i}\left(f_{i}(\kappa)-\lambda^{o}_{i}g_{i}(\kappa) \right)+\xi^{o}_{\aleph(\bar{\kappa})}h_{\aleph(\bar{\kappa})}(\kappa) $ is a  GpSLEP function, then $ \bar{\kappa} $ is a weak efficient soluion for ($ VFP $).
\end{theorem}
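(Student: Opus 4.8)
The plan is to argue by contradiction in the same spirit as Theorem \ref{th2}, but now exploiting that the single aggregated function
$$\Phi(\kappa)=\sum_{i=1}^{p}\zeta^{o}_{i}\left(f_{i}(\kappa)-\lambda^{o}_{i}g_{i}(\kappa)\right)+\sum_{j\in\aleph(\bar{\kappa})}\xi^{o}_{j}h_{j}(\kappa)$$
is assumed GpSLEP and is geodesic E-$\eta$-semidifferentiable at $\bar{\kappa}$ (the latter because each $f_{i},g_{i},h_{j}$ is, and the one-sided semidifferential of Definition \ref{de4} is a limit of a fixed linear combination). The target is to show that the failure of weak efficiency forces $\Phi$ to strictly decrease from $\bar{\kappa}$ to some feasible point, which through Lemma \ref{lemma2}(2) makes $\Phi'_{+}$ negative along the connecting geodesic, directly contradicting (\ref{eq17}).

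First I would suppose $\bar{\kappa}$ is not weakly efficient, so there is a feasible $\kappa^{*}\in B$ with $\frac{f_{i}(\kappa^{*})}{g_{i}(\kappa^{*})}<\frac{f_{i}(\bar{\kappa})}{g_{i}(\bar{\kappa})}=\lambda^{o}_{i}$ for every $i\in P$. Since $g_{i}>0$, this gives $f_{i}(\kappa^{*})-\lambda^{o}_{i}g_{i}(\kappa^{*})<0$ for all $i$, whereas the definition $\lambda^{o}_{i}=f_{i}(\bar{\kappa})/g_{i}(\bar{\kappa})$ gives $f_{i}(\bar{\kappa})-\lambda^{o}_{i}g_{i}(\bar{\kappa})=0$. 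Multiplying by $\zeta^{o}_{i}\geq0$ and summing over $i$ then yields $\sum_{i=1}^{p}\zeta^{o}_{i}\left(f_{i}(\kappa^{*})-\lambda^{o}_{i}g_{i}(\kappa^{*})\right)<0$.

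Next I would treat the constraint block. Feasibility of $\kappa^{*}$ gives $h_{j}(\kappa^{*})\leq0$, while $h_{j}(\bar{\kappa})=0$ for $j\in\aleph(\bar{\kappa})$; together with $\xi^{o}\geq0$ from (\ref{eq19}), the complementarity relation (\ref{eq18}) forces $\xi^{o}_{j}=0$ on inactive indices, so that $\sum_{j\in\aleph(\bar{\kappa})}\xi^{o}_{j}h_{j}(\kappa^{*})\leq0=\sum_{j\in\aleph(\bar{\kappa})}\xi^{o}_{j}h_{j}(\bar{\kappa})$. Adding the objective and constraint blocks gives $\Phi(\kappa^{*})<0=\Phi(\bar{\kappa})$.

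Finally, since $\Phi$ is GpSLEP and geodesic E-$\eta$-semidifferentiable at $\bar{\kappa}$ with $E(\bar{\kappa})=\bar{\kappa}$, Lemma \ref{lemma2}(2) applied to $\Phi$ (with base point $\bar{\kappa}$ and test point $\kappa^{*}$) converts the strict decrease $\Phi(\kappa^{*})<\Phi(\bar{\kappa})$ into $\Phi'_{+}(\gamma_{\bar{\kappa},E(\kappa^{*})}(t))<0$. Using the additivity of the limit in Definition \ref{de4}, I would decompose $\Phi'_{+}(\gamma_{\bar{\kappa},E(\kappa^{*})}(t))=\sum_{i=1}^{p}\zeta^{o}_{i}\left(f'_{i+}(\gamma_{\bar{\kappa},E(\kappa^{*})}(t))-\lambda^{o}_{i}g'_{i+}(\gamma_{\bar{\kappa},E(\kappa^{*})}(t))\right)+\sum_{j\in\aleph(\bar{\kappa})}\xi^{o}_{j}h'_{j+}(\gamma_{\bar{\kappa},E(\kappa^{*})}(t))$, which is exactly the left-hand side of (\ref{eq17}) at $\kappa^{*}$ and hence nonnegative, the desired contradiction. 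The hard part will be the two supporting technical points: justifying this additive splitting of the semidifferential (legitimate only because each component limit exists by hypothesis, so the limit of the sum equals the sum of the limits), and securing that $\Phi(\kappa^{*})<\Phi(\bar{\kappa})$ is strict rather than merely $\leq$; the latter needs $\zeta^{o}\geq0$ to be not identically zero, for otherwise one recovers only a weak inequality and the strict (pseudo) branch of Lemma \ref{lemma2}(2) cannot be invoked.
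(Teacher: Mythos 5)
Your proposal is correct and follows precisely the route the paper intends: the paper states this theorem without proof, and its proof of the immediately preceding theorem (the separated GpSLEP/GqSLEP version) runs the same contradiction argument — derive $\Phi(\kappa^{*})<0=\Phi(\bar{\kappa})$ for the aggregated function, apply Lemma \ref{lemma2}, decompose the semidifferential, and contradict (\ref{eq17}). Your closing caveat that $\zeta^{o}$ must not be identically zero (else the strict inequality degenerates to $\leq$ and the pseudo branch of Lemma \ref{lemma2} is unavailable) is a genuine refinement: the paper's own analogous proof silently passes from $\zeta^{o}\geqslant 0$ to a strict inequality, so you have in fact repaired a gap the paper leaves open.
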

\begin{corollary}
Let $ \bar{\kappa}\in B, E(\bar{\kappa})=\bar{\kappa} $ and $ \lambda_{i}^{o}=\frac{f_{i}(\bar{\kappa})}{g_{i}(\bar{\kappa})}(i\in P) $. Futher let $ f, h_{\aleph(\bar{\kappa})} $ be all GSLEP function, $ g $ be a geodesic semilocal E-preincave function and $ f,g,h $ be all geodesic E-$ \eta $- semidifferentiable at $ \bar{\kappa} $. If there exist $ \zeta^{o}\in \mathbb{R}^{p} $ and $ \xi^{o}\in \mathbb{R}^{m} $ such that the conditions (\ref{eq17})-(\ref{eq19}) hold, then $ \bar{\kappa} $ is a weak efficient soluion for ($ VFP $). 
\end{corollary}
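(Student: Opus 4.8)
The plan is to deduce this corollary from the immediately preceding theorem, whose only extra hypothesis beyond (\ref{eq17})--(\ref{eq19}) is that the aggregated function
$$\Phi(\kappa)=\sum_{i=1}^{p}\zeta^{o}_{i}\left(f_{i}(\kappa)-\lambda^{o}_{i}g_{i}(\kappa)\right)+\xi^{o}_{\aleph(\bar{\kappa})}h_{\aleph(\bar{\kappa})}(\kappa)$$
be a GpSLEP function. Since conditions (\ref{eq17})--(\ref{eq19}) are assumed verbatim, it suffices to show that under the stronger convexity hypotheses of the corollary this $\Phi$ is automatically GpSLEP; the conclusion then follows at once by invoking the preceding theorem.

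First I would check that each summand of $\Phi$ is a GSLEP function on $B$. By hypothesis each $f_{i}$ is GSLEP and $\zeta^{o}_{i}\geqslant 0$, so $\zeta^{o}_{i}f_{i}$ is GSLEP. Because $g_{i}$ is geodesic semilocal E-preincave, the function $-g_{i}$ satisfies the reversed inequality in Definition \ref{de1}, i.e.\ $-g_{i}$ is GSLEP; since $\lambda^{o}_{i}=f_{i}(\bar{\kappa})/g_{i}(\bar{\kappa})\geqslant 0$ (recall $g_{i}>0$ on $K_{0}$), the term $-\zeta^{o}_{i}\lambda^{o}_{i}g_{i}$ is again GSLEP. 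Likewise each $h_{j}$ with $j\in\aleph(\bar{\kappa})$ is GSLEP and $\xi^{o}_{j}\geqslant 0$, so $\xi^{o}_{j}h_{j}$ is GSLEP.

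Next I would verify that the finite sum of these GSLEP summands is itself GSLEP. Fixing $\kappa_{1},\kappa_{2}\in B$, each summand comes with its own local bound $v_{k}(\kappa_{1},\kappa_{2})\leq u(\kappa_{1},\kappa_{2})$ on which the defining inequality of Definition \ref{de1} holds. Setting $v(\kappa_{1},\kappa_{2})=\min_{k} v_{k}(\kappa_{1},\kappa_{2})>0$, a finite minimum, all of these inequalities hold simultaneously for $t\in[0,v(\kappa_{1},\kappa_{2})]$, and adding them term by term shows that $\Phi$ is GSLEP on $B$. Finally, by the Remark following Definition \ref{de3}, every GSLEP function on a GLEI set is a GpSLEP function, so $\Phi$ is GpSLEP. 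With (\ref{eq17})--(\ref{eq19}) in force, the preceding theorem applies and $\bar{\kappa}$ is a weak efficient solution for (VFP).

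The main obstacle is the sign and radius bookkeeping needed to pass from the three separate convexity-type hypotheses to the single GpSLEP property of $\Phi$. One must use the nonnegativity of the multipliers $\zeta^{o}$, $\xi^{o}$ and of $\lambda^{o}_{i}$ so that scaling preserves the direction of each inequality, correctly exploit the sign reversal furnished by the preincavity of $g$ (which is what turns $-\lambda^{o}_{i}g_{i}$ into a GSLEP summand), and select the common local radius $v(\kappa_{1},\kappa_{2})$ as a finite minimum so that every summand's inequality is valid on one shared interval. The positivity $\lambda^{o}_{i}\geqslant 0$, which rests on the standing assumption $g_{i}>0$ together with $f_{i}(\bar{\kappa})\geqslant 0$, is the delicate point; once it is secured, the reduction to the previous theorem is routine.
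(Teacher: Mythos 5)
Your proposal is correct and follows essentially the route the paper intends: the corollary is printed without proof precisely because it is the immediately preceding theorem specialized via the remark after Definition \ref{de3} (every GSLEP function on a GLEI set is GpSLEP), and your bookkeeping supplies exactly the verification the paper leaves tacit --- nonnegative scaling by $\zeta^{o}_{i},\xi^{o}_{j}$ preserves the GSLEP inequality, the geodesic semilocal E-preincavity of $g_{i}$ reverses it so that $-\lambda^{o}_{i}g_{i}$ becomes a GSLEP summand, and taking the finite minimum of the radii $v_{k}(\kappa_{1},\kappa_{2})$ yields one common interval on which the summed inequality holds, so the aggregate $\sum_{i=1}^{p}\zeta^{o}_{i}\left(f_{i}-\lambda^{o}_{i}g_{i}\right)+\xi^{o}_{\aleph(\bar{\kappa})}h_{\aleph(\bar{\kappa})}$ is GSLEP, hence GpSLEP, and the preceding theorem applies. (One could equally reduce to the earlier theorem by noting each $f_{i}-\lambda^{o}_{i}g_{i}$ is GSLEP hence GpSLEP and each $h_{j}$, $j\in\aleph(\bar{\kappa})$, is GSLEP hence GqSLEP; the two reductions are interchangeable here.) One point deserves correction, though: the nonnegativity $\lambda^{o}_{i}\geqslant 0$ that your scaling of the preincave term requires is \emph{not} among the paper's standing assumptions --- the problem (VFP) assumes only $g_{i}>0$ on $K_{0}$, never $f_{i}(\bar{\kappa})\geqslant 0$ --- so what you describe as a standing assumption is in fact an extra hypothesis you are adding. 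It is the standard one in the fractional-programming literature, and without it the argument genuinely fails (for $\lambda^{o}_{i}<0$ with $\zeta^{o}_{i}>0$ the term $-\zeta^{o}_{i}\lambda^{o}_{i}g_{i}$ is a positive multiple of a preincave function and the GSLEP inequality runs the wrong way), so the corollary as printed is not fully justified by the preceding theorem either; you should state $f_{i}(\bar{\kappa})\geqslant 0$ (or $\lambda^{o}_{i}\geqslant 0$) as an added hypothesis rather than attribute it to the paper.
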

	  \vspace{1.0mm}
  The dual problem for  ($ VFP $) is formulated as follows
\begin{eqnarray*}
	(VFD)  \begin{cases}
		minimize \left(\zeta_{i}, i=1,2,\cdots, p \right) ,\\  subject \ \ to\ \ \sum_{i=1}^{p}\alpha_{i}\left(f'_{i+}\left( \gamma_{\lambda,E(\kappa)}(t)\right) -\zeta_{i}g'_{i+}\left( \gamma_{\lambda,E(\kappa)}(t)\right)  \right)\\\hspace{1.5in}  +\sum_{j=1}^{m}\beta_{j}h'_{j+}\left( \gamma_{\lambda,E(\kappa)}(t)\right)  \geqslant 0 \\ \kappa\in K_{0}, t\in[0,1],\\
		f_{i}(\lambda)-\zeta_{i}g_{i}(\lambda)\geqslant0,\ \ \ i\in P,
		\beta_{j}h_{j}(\lambda)\geqslant 0,\ \ \ j\in \aleph,\\
	\end{cases}
\end{eqnarray*}
where
$\zeta =(\zeta_{i}, i=1,2,\cdots, p)\geqslant 0$, $\alpha =(\alpha_{i}, i=1,2,\cdots, p)> 0$,\\ $\beta =(\beta_{i}, i=1,2,\cdots, m)\geqslant 0$,   $\lambda\in K_{0}. $

\vspace{1.0mm}
Denote the feasible set problem ($ VFD $) by $ K^{,} $.
\begin{theorem}[General Weak Duality]
Let $ \kappa\in K $, $ (\alpha,\beta,\lambda,\zeta)\in K^{,} $ and $ E(\lambda)=\lambda $. If $ \sum_{i=1}^{p}\alpha_{i}(f_{i}-\zeta_{i}g_{i}) $ is a  GpSLEP  function and $ \sum_{j=1}^{m}\beta_{j}h_{j} $ is a GqSLEP  function and they are all geodesic E-$ \eta $-semidifferentiable at $ \lambda $, then $ \frac{f(\kappa)}{g(\kappa)}\nleq \zeta $.
\end{theorem}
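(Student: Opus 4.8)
The plan is to argue by contradiction, so I would begin by assuming $\frac{f(\kappa)}{g(\kappa)} \leq \zeta$ in the vector (Pareto) sense, that is $\frac{f_i(\kappa)}{g_i(\kappa)} \leq \zeta_i$ for every $i \in P$ with strict inequality for at least one index $i_0$. Since $g_i(\kappa) > 0$ for all $i$, multiplying through by $g_i(\kappa)$ turns each fractional inequality into $f_i(\kappa) - \zeta_i g_i(\kappa) \leq 0$, with $f_{i_0}(\kappa) - \zeta_{i_0} g_{i_0}(\kappa) < 0$. On the dual side, feasibility $(\alpha,\beta,\lambda,\zeta) \in K^{,}$ supplies $f_i(\lambda) - \zeta_i g_i(\lambda) \geq 0$ for every $i$. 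Chaining the two gives $f_i(\kappa) - \zeta_i g_i(\kappa) \leq f_i(\lambda) - \zeta_i g_i(\lambda)$ for all $i$, strict at $i_0$; since $\alpha_i > 0$, summing over $i$ yields the strict scalar inequality $\phi(\kappa) < \phi(\lambda)$, where I abbreviate $\phi := \sum_{i=1}^{p}\alpha_i(f_i - \zeta_i g_i)$.

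Next I would invoke the generalized convexity hypotheses. Because $\phi$ is GpSLEP and geodesic E-$\eta$-semidifferentiable at $\lambda$, and $E(\lambda)=\lambda$, Lemma \ref{lemma2}(2) in its GpSLEP form applies with $\kappa^{*}=\lambda$: from $\phi(\kappa) < \phi(\lambda)$ it produces $\phi'_{+}(\gamma_{\lambda,E(\kappa)}(t)) < 0$. Expanding this semidifferential through its defining one-sided difference quotient and using linearity in the function, $\phi'_{+}(\gamma_{\lambda,E(\kappa)}(t)) = \sum_{i=1}^{p}\alpha_i\bigl(f'_{i+}(\gamma_{\lambda,E(\kappa)}(t)) - \zeta_i g'_{i+}(\gamma_{\lambda,E(\kappa)}(t))\bigr)$, so this sum is strictly negative.

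For the constraint block I would set $\psi := \sum_{j=1}^{m}\beta_j h_j$. Primal feasibility $\kappa \in K$ gives $h_j(\kappa) \leq 0$, and $\beta \geq 0$ then forces $\psi(\kappa) \leq 0$, while the dual constraint $\beta_j h_j(\lambda) \geq 0$ forces $\psi(\lambda) \geq 0$; hence $\psi(\kappa) \leq \psi(\lambda)$. As $\psi$ is GqSLEP and semidifferentiable at $\lambda$, Lemma \ref{lemma2}(2) in its GqSLEP form gives $\psi'_{+}(\gamma_{\lambda,E(\kappa)}(t)) = \sum_{j=1}^{m}\beta_j h'_{j+}(\gamma_{\lambda,E(\kappa)}(t)) \leq 0$. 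Adding this to the strict inequality from the previous paragraph yields $\sum_{i=1}^{p}\alpha_i(f'_{i+} - \zeta_i g'_{i+}) + \sum_{j=1}^{m}\beta_j h'_{j+} < 0$ along $\gamma_{\lambda,E(\kappa)}(t)$, which directly contradicts the first (gradient-type) feasibility constraint defining $K^{,}$. This contradiction establishes $\frac{f(\kappa)}{g(\kappa)} \nleq \zeta$.

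The step I expect to be the main obstacle is the additivity used to pass from $\phi'_{+}$ and $\psi'_{+}$, which is what Lemma \ref{lemma2} actually controls, to the \emph{separated} sums $\sum_i \alpha_i(f'_{i+}-\zeta_i g'_{i+})$ and $\sum_j \beta_j h'_{j+}$ that occur in the dual constraint. This needs the geodesic E-$\eta$-semidifferential to behave as a linear operator on functions semidifferentiable at $\lambda$, which follows from its limit definition but must be stated explicitly; in particular one must ensure that the individual one-sided limits for each $f_i$, $g_i$, $h_j$ exist so the limit of the sum splits, which the standing semidifferentiability assumptions guarantee. A secondary point of care is selecting the correct branch of Lemma \ref{lemma2} — the GpSLEP branch (strict) for $\phi$ and the GqSLEP branch (non-strict) for $\psi$ — since it is precisely the single strict inequality surviving from the index $i_0$ that drives the final contradiction.
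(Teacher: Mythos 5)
Your proposal is correct and takes essentially the same route as the paper's proof: negate the conclusion, use $\alpha>0$, $g>0$ and dual feasibility to get the strict scalar inequality $\sum_{i}\alpha_{i}(f_{i}(\kappa)-\zeta_{i}g_{i}(\kappa))<0\leq\sum_{i}\alpha_{i}(f_{i}(\lambda)-\zeta_{i}g_{i}(\lambda))$, apply the GpSLEP and GqSLEP branches of Lemma \ref{lemma2} at $\lambda$, and add the resulting semiderivative inequalities to contradict the first constraint defining $K^{,}$. If anything, you are more careful than the paper, which leaves the contradiction hypothesis implicit in its opening inequality and silently splits $\bigl(\sum_{i}\alpha_{i}(f_{i}-\zeta_{i}g_{i})\bigr)'_{+}$ and $\bigl(\sum_{j}\beta_{j}h_{j}\bigr)'_{+}$ into the separated sums, the linearity point you rightly flag as needing the existence of each individual one-sided limit.
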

\begin{proof}
From $ \alpha>0 $ and $ (\alpha, \beta,\lambda,\zeta)\in K^{,} $, we have 
$$\sum_{i=1}^{p}\alpha_{i}(f_{i}(\kappa)-\zeta_{i}g_{i}(\kappa))<0\leq\sum_{i=1}^{p}\alpha_{i}(f_{i}(\lambda)-\zeta_{i}g_{i}(\lambda)). $$
By the GpSLEP of $ \sum_{i=1}^{p}\alpha_{i}(f_{i}-\zeta_{i}g_{i}) $ and Lemma \ref{lemma2}, we obtain $$\left( \sum_{i=1}^{p}\alpha_{i}(f_{i}-\zeta_{i}g_{i})  \right)'_{+}\left(\gamma_{\lambda,E(\kappa)}(t) \right) <0, $$ that is, 
$$\sum_{i=1}^{p}\alpha_{i}\left(f'_{i+}\left( \gamma_{\lambda,E(\kappa)}(t)\right) -\zeta_{i}g'_{i+}\left( \gamma_{\lambda,E(\kappa)}(t)\right]  \right)<0. $$
Also, from $ \beta\geqslant  0$ and $ \kappa\in K $, then
$$\sum_{j=1}^{m}\beta_{j}h_{j}(\kappa)\leq 0 \leq\sum_{j=1}^{m}\beta_{j}h_{j}(\lambda). $$ 
Using the GqSLEP of $ \sum_{j=1}^{m}\beta_{j}h_{j} $ and Lemma \ref{lemma2}, one has
$$\left( \sum_{j=1}^{m}\beta_{j}h_{j} \right)'_{+}\left(\gamma_{\lambda,E(\kappa)}(t) \right) \leq 0. $$
Then 
$$ \sum_{j=1}^{m}\beta_{j}h'_{j+} \left(\gamma_{\lambda,E(\kappa)}(t) \right) \leq 0. $$
Therefore 
$$\sum_{i=1}^{p}\alpha_{i}\left(f'_{i+}\left( \gamma_{\lambda,E(\kappa)}(t)\right) -\zeta_{i}g'_{i+}\left( \gamma_{\lambda,E(\kappa)}(t)\right)  \right) +\sum_{j=1}^{m}\beta_{j}h'_{j+}\left( \gamma_{\lambda,E(\kappa)}(t)\right)  < 0, $$
This is a contradiction with $ (\alpha,\beta,\lambda,\zeta)\in K^{,} $. 
\end{proof}
\begin{theorem}
	Consider that $ \kappa\in K $, $ (\alpha, \beta,\lambda,\zeta)\in K^{,} $ and $ E(\lambda)=\lambda $. If $ \sum_{i=1}^{p}\alpha_{i}(f_{i}-\zeta_{i}g_{i})+\sum_{j=1}^{m}\beta_{j}h_{j}  $ is a  GpSLEP function  and geodesic E-$ \eta $-semidifferentiable at $ \lambda $, then $ \frac{f(\kappa)}{g(\kappa)}\nleq \zeta $.
\end{theorem}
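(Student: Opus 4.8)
The plan is to argue by contradiction, exactly paralleling the proof of the preceding weak duality theorem, except that the two separate hypotheses (one GpSLEP and one GqSLEP summand) are now replaced by the single aggregated GpSLEP function, so only the pseudo case of Lemma \ref{lemma2} is invoked. Throughout, write $\Phi=\sum_{i=1}^{p}\alpha_i(f_i-\zeta_i g_i)+\sum_{j=1}^{m}\beta_j h_j$ for the combined objective, which by hypothesis is GpSLEP and geodesic E-$\eta$-semidifferentiable at $\lambda$, with $E(\lambda)=\lambda$.

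First I would suppose, for contradiction, that the conclusion fails, i.e. $\frac{f(\kappa)}{g(\kappa)}\leq\zeta$. Since $g_i(\kappa)>0$ for every $i\in P$, this gives $f_i(\kappa)-\zeta_i g_i(\kappa)<0$ for all $i$, and because $\alpha>0$ strictly the first block satisfies $\sum_{i=1}^{p}\alpha_i(f_i(\kappa)-\zeta_i g_i(\kappa))<0$. At the same time $\kappa\in K$ forces $h_j(\kappa)\leq 0$, so with $\beta\geqslant 0$ the constraint block obeys $\sum_{j=1}^{m}\beta_j h_j(\kappa)\leq 0$; adding these yields $\Phi(\kappa)<0$.

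Next I would read off from dual feasibility $(\alpha,\beta,\lambda,\zeta)\in K'$ the inequalities $f_i(\lambda)-\zeta_i g_i(\lambda)\geqslant 0$ and $\beta_j h_j(\lambda)\geqslant 0$, whence $\Phi(\lambda)\geqslant 0$. Combining the two estimates gives $\Phi(\kappa)<0\leq\Phi(\lambda)$, that is $\Phi(\kappa)<\Phi(\lambda)$. Applying the pseudo case of Lemma \ref{lemma2} to the GpSLEP function $\Phi$ at the point $\lambda$ (legitimate since $E(\lambda)=\lambda$) then produces $\Phi'_{+}(\gamma_{\lambda,E(\kappa)}(t))<0$ for $\kappa\in K$.

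The final step, and the only place needing care, is to identify $\Phi'_{+}(\gamma_{\lambda,E(\kappa)}(t))$ with the left-hand side of the dual constraint. Using the additivity and positive homogeneity of the geodesic E-$\eta$-semidifferential, the semiderivative of the sum splits, giving $\sum_{i=1}^{p}\alpha_i(f'_{i+}(\gamma_{\lambda,E(\kappa)}(t))-\zeta_i g'_{i+}(\gamma_{\lambda,E(\kappa)}(t)))+\sum_{j=1}^{m}\beta_j h'_{j+}(\gamma_{\lambda,E(\kappa)}(t))<0$. Since $\kappa\in K\subset K_0$ and $t\in[0,1]$, this directly contradicts the dual feasibility inequality, which requires this quantity to be $\geqslant 0$. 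The contradiction establishes $\frac{f(\kappa)}{g(\kappa)}\nleq\zeta$. I expect the main (indeed only) obstacle to be the linearity of the semidifferential used in this last splitting; everything else is a routine specialization of the previous theorem, with the two generalized-convexity summand hypotheses merged into the single aggregated GpSLEP assumption.
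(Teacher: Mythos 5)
Your proof is correct and is essentially the argument the paper intends: the paper states this theorem without its own proof, as the aggregated analogue of the General Weak Duality theorem, and your contradiction argument --- deriving $\Phi(\kappa)<0\leq\Phi(\lambda)$ from $\alpha>0$, $g>0$, $h_j(\kappa)\leq 0$ and dual feasibility, then invoking the GpSLEP case of Lemma \ref{lemma2} once for the single aggregate instead of twice for the two blocks --- mirrors that proof step for step. The one caveat you flag, splitting $\Phi'_{+}\left(\gamma_{\lambda,E(\kappa)}(t)\right)$ into the individual semiderivatives appearing in the dual constraint, is used silently in the paper's own weak-duality proof as well (and is harmless here, since the dual constraint's very formulation presupposes that $f'_{i+}$, $g'_{i+}$, $h'_{j+}$ exist, so additivity of one-sided limits applies), so nothing further is needed.
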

\begin{theorem}[General Converse Duality]
Let $ \bar{\kappa}\in K $ and $ (\kappa^{*},\alpha^{*}, \beta^{*},\zeta^{*})\in K^{,} $,$ E(\kappa^{*})=\kappa^{*} $, where $\zeta^{*}= \frac{f(\kappa^{*})}{g(\kappa^{*})}=\frac{f(\bar{\kappa})}{g(\bar{\kappa})}=(\zeta^{*}_{i}, \ \ \ i=1,2,\cdots, p) $. If $ f_{i}-\zeta_{i}^{*}g_{i} (i\in P), h_{j}(j\in \aleph)$  are all GSLEP  functions and all geodesic E-$ \eta $-semidifferentiable at $ \kappa^{*} $, then $ \bar{\kappa} $ is a weak efficient solution for ($ VFP $).
\end{theorem}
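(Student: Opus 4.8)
The plan is to argue by contradiction, following the same skeleton as the proof of Theorem \ref{th2} but substituting dual feasibility of $(\kappa^{*},\alpha^{*},\beta^{*},\zeta^{*})\in K^{,}$ for the multiplier hypotheses used there. Suppose $\bar{\kappa}$ is \emph{not} a weak efficient solution for (VFP). Then there is a feasible point $\kappa\in K$ with $\frac{f_{i}(\kappa)}{g_{i}(\kappa)}<\frac{f_{i}(\bar{\kappa})}{g_{i}(\bar{\kappa})}=\zeta^{*}_{i}$ for every $i\in P$. Since $g_{i}(\kappa)>0$, this rearranges to $f_{i}(\kappa)-\zeta^{*}_{i}g_{i}(\kappa)<0$. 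The crucial observation is that the hypothesis $\zeta^{*}_{i}=\frac{f_{i}(\kappa^{*})}{g_{i}(\kappa^{*})}$ forces $f_{i}(\kappa^{*})-\zeta^{*}_{i}g_{i}(\kappa^{*})=0$, so the function $f_{i}-\zeta^{*}_{i}g_{i}$ takes a strictly smaller value at $\kappa$ than at $\kappa^{*}$.

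Next I would feed this into Lemma \ref{lemma2}(1). Since each $f_{i}-\zeta^{*}_{i}g_{i}$ is GSLEP and geodesic E-$\eta$-semidifferentiable at $\kappa^{*}$, and $E(\kappa^{*})=\kappa^{*}$ so that Definition \ref{de4} applies, the lemma gives $\big(f_{i}-\zeta^{*}_{i}g_{i}\big)(\kappa)-\big(f_{i}-\zeta^{*}_{i}g_{i}\big)(\kappa^{*})\geqslant\big(f'_{i+}-\zeta^{*}_{i}g'_{i+}\big)\big(\gamma_{\kappa^{*},E(\kappa)}(t)\big)$. The left-hand side being strictly negative then yields $f'_{i+}(\gamma_{\kappa^{*},E(\kappa)}(t))-\zeta^{*}_{i}g'_{i+}(\gamma_{\kappa^{*},E(\kappa)}(t))<0$ for all $i$. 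Multiplying by $\alpha^{*}_{i}>0$ and summing preserves strictness, so $\sum_{i=1}^{p}\alpha^{*}_{i}\big(f'_{i+}(\gamma_{\kappa^{*},E(\kappa)}(t))-\zeta^{*}_{i}g'_{i+}(\gamma_{\kappa^{*},E(\kappa)}(t))\big)<0$.

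For the constraint terms I would apply Lemma \ref{lemma2}(1) again, now to each $h_{j}$, obtaining $h_{j}(\kappa)-h_{j}(\kappa^{*})\geqslant h'_{j+}(\gamma_{\kappa^{*},E(\kappa)}(t))$. Weighting by $\beta^{*}_{j}\geqslant 0$ and summing, the left-hand side is $\sum_{j}\beta^{*}_{j}h_{j}(\kappa)-\sum_{j}\beta^{*}_{j}h_{j}(\kappa^{*})$; feasibility of $\kappa$ gives $h_{j}(\kappa)\leq 0$, hence $\sum_{j}\beta^{*}_{j}h_{j}(\kappa)\leq 0$, while the dual sign condition $\beta^{*}_{j}h_{j}(\kappa^{*})\geqslant 0$ gives $\sum_{j}\beta^{*}_{j}h_{j}(\kappa^{*})\geqslant 0$, so this left-hand side is $\leq 0$ and therefore $\sum_{j}\beta^{*}_{j}h'_{j+}(\gamma_{\kappa^{*},E(\kappa)}(t))\leq 0$. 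Adding this to the strict inequality of the previous step produces $\sum_{i}\alpha^{*}_{i}(f'_{i+}-\zeta^{*}_{i}g'_{i+})+\sum_{j}\beta^{*}_{j}h'_{j+}<0$ along $\gamma_{\kappa^{*},E(\kappa)}(t)$, which directly contradicts the gradient feasibility constraint defining $K^{,}$ at this $\kappa$. Hence no such $\kappa$ exists and $\bar{\kappa}$ is a weak efficient solution for (VFP).

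I expect the only delicate point to be the sign bookkeeping in the constraint step: one must combine the primal feasibility $h_{j}(\kappa)\leq 0$, the nonnegativity $\beta^{*}\geqslant 0$, and the dual-side condition $\beta^{*}_{j}h_{j}(\kappa^{*})\geqslant 0$ in exactly the right order so that summing the per-index inequalities of Lemma \ref{lemma2}(1) with nonnegative weights lands on $\sum_{j}\beta^{*}_{j}h'_{j+}\leq 0$ rather than its reverse. Everything else is a transcription of the argument already used for Theorem \ref{th2}, together with the linearity of the geodesic E-$\eta$-semidifferential, which is what lets $(f_{i}-\zeta^{*}_{i}g_{i})'_{+}$ split as $f'_{i+}-\zeta^{*}_{i}g'_{i+}$.
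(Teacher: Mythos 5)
Your proof is correct and takes essentially the same route as the paper's: a contradiction argument built from Lemma \ref{lemma2}(1) applied to $f_{i}-\zeta^{*}_{i}g_{i}$ and $h_{j}$, combined with the dual constraint of $(VFD)$ and the sign conditions $\alpha^{*}>0$, $\beta^{*}\geqslant 0$, $h_{j}(\kappa)\leq 0$, $\beta^{*}_{j}h_{j}(\kappa^{*})\geqslant 0$. The only organizational difference is that the paper first derives the value-level inequality (\ref{eq22}), namely $\sum_{i=1}^{p}\alpha^{*}_{i}\left(f_{i}(\kappa)-\zeta^{*}_{i}g_{i}(\kappa)\right)\geqslant 0$ for all $\kappa\in K$, and then contradicts it using Lemma \ref{Lemma1}, whereas you run the same chain of inequalities in the opposite direction and land the contradiction on the dual derivative constraint directly, inlining the needed half of Lemma \ref{Lemma1} via $g_{i}>0$ --- both proofs also share the same implicit assumption that the semiderivative splits linearly as $f'_{i+}-\zeta^{*}_{i}g'_{i+}$.
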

\begin{proof}
	By using the hypotheses and Lemma \ref{lemma2}, for any $ \kappa\in K $, we obtain  $$\left( f_{i}(\kappa)-\zeta_{i}^{*}g_{i}(\kappa)\right) -\left(f_{i}(\kappa^{*})-\zeta_{i}^{*}g_{i}(\kappa^{*}) \right)\geqslant f'_{i+}\left( \gamma_{\kappa^{*},E(\kappa)}(t)\right) -\zeta_{i}g'_{i+}\left( \gamma_{\kappa^{*},E(\kappa)}(t)\right)  $$
	$$h_{j}(y)-h_{j}(\kappa^{*})\geqslant h'_{j+}\left( \gamma_{\kappa^{*},E(\kappa)}(t)\right). $$
	
	\vspace{1.0mm}
	Utilizing the fiest constraint condition for ($ VFD $), $ \alpha^{*}>0,\beta^{*}\geqslant 0, \zeta^{*}\geqslant 0 $ and the two inequalilities above, hence
	\begin{eqnarray}&&
	\sum_{i=1}^{p}\alpha^{*}_{i}\left(\left( f_{i}(\kappa)-\zeta_{i}^{*}g_{i}(\kappa)\right) -\left(f_{i}(\kappa^{*})-\zeta_{i}^{*}g_{i}(\kappa^{*}) \right) \right) + \sum_{j=1}^{m}\beta^{*}_{j}\left(h_{j}(\kappa)-h_{j}(\kappa^{*}) \right) \nonumber\\\hspace{0.5in} &\geqslant& \sum_{i=1}^{p}\left(f'_{i+}\left( \gamma_{\kappa^{*},E(\kappa)}(t)\right) -\zeta_{i}g'_{i+}\left( \gamma_{\kappa^{*},E(\kappa)}(t)\right) \right)\nonumber\\\hspace{0.5in}&+&\sum_{j=1}^{m}\beta^{*}_{j} h'_{j+}\left( \gamma_{\kappa^{*},E(\kappa)}(t)\right)\nonumber\\\hspace{0.5in} &\geqslant& 0.
	\end{eqnarray}
	In view of $ h_{j}(\kappa)\leq 0, \beta^{*}_{j}\geqslant 0, \beta^{*}_{j}h_{j}(\kappa^{*})\geqslant (j\in \aleph)  $ and $ \zeta^{*}_{i}= \frac{f_{i}(\kappa^{*})}{g_{i}(\kappa^{*})}\ \ \ (i\in P) $, then
	\begin{equation}\label{eq22}
	\sum_{i=1}^{p}\alpha^{*}_{i}\left( f_{i}(\kappa)-\zeta_{i}^{*}g_{i}(\kappa)\right)\geqslant 0 \ \ \ \forall y\in Y .
	\end{equation} 
	
	Consider that $ \bar{\kappa} $ is not a weak efficient solution for ($ VFP $). From $ \zeta^{*}_{i}= \frac{f_{i}(\bar{\kappa})}{g_{i}(\bar{\kappa})}\ \ \ (i\in P) $ and Lemma \ref{Lemma1}, it follows that $ \bar{\kappa} $ is not a weak efficient solution for ($ VFP_{\zeta^{*}} $). Hence, $ \tilde{\kappa}\in K $ such that 
	$$ f_{i}(\tilde{\kappa})-\zeta_{i}^{*}g_{i}(\tilde{\kappa}) <f_{i}(\bar{\kappa})-\zeta_{i}^{*}g_{i}(\bar{\kappa}) = 0,\ \ \ i\in P,  $$
	hence 
	$ \sum_{i=1}^{p}\alpha^{*}_{i}\left(f_{i}(\tilde{\kappa})-\zeta_{i}^{*}g_{i}(\tilde{\kappa}) \right)<0  $.
	This is a contradiction to the inequality (\ref{eq22}). The proof of theorem is completed. 
\end{proof}

\end{document}